\numberwithin{equation}{section}
\numberwithin{figure}{section}
\theoremstyle{plain}
\newtheorem{thm}{\protect\theoremname}[section]
  \theoremstyle{definition}
  \newtheorem{defn}[thm]{\protect\definitionname}
  \theoremstyle{remark}
  \newtheorem{rem}[thm]{\protect\remarkname}
  \theoremstyle{remark}
  \newtheorem*{rem*}{\protect\remarkname}
  \theoremstyle{definition}
  \newtheorem{example}[thm]{\protect\examplename}
  \theoremstyle{plain}
  \newtheorem{prop}[thm]{\protect\propositionname}
  \theoremstyle{plain}
  \newtheorem{lem}[thm]{\protect\lemmaname}
  \theoremstyle{plain}
  \newtheorem{cor}[thm]{\protect\corollaryname}
\makeatletter \newcommand{\xyR}[1]{%
\makeatletter \xydef@\xymatrixrowsep@{#1} \makeatother }
\makeatletter \newcommand{\xyC}[1]{%
\makeatletter \xydef@\xymatrixcolsep@{#1} \makeatother }
\DeclareSymbolFont{rsfs}{U}{rsfs}{m}{n}
\DeclareSymbolFontAlphabet{\mathrf}{rsfs}
  \providecommand{\corollaryname}{Corollary}
  \providecommand{\definitionname}{Definition}
  \providecommand{\examplename}{Example}
  \providecommand{\lemmaname}{Lemma}
  \providecommand{\propositionname}{Proposition}
  \providecommand{\remarkname}{Remark}
\providecommand{\theoremname}{Theorem}
\begin{document}

\title{Cellular coalgebras over the Barratt-Eccles operad I.}

\author{Justin R. Smith}

\date{\today}

\subjclass[2000]{55P15, 18D50}

\keywords{Cofree coalgebra, operad, resolution}

\email{jsmith@drexel.edu}

\urladdr{http://vorpal.math.drexel.edu }

\address{Department of Mathematics\\
Drexel University\\
Philadelphia, PA 19104}

\maketitle
\global\long\def\ring{\mathbb{Z}}
\global\long\def\integers{\mathbb{Z}}
\global\long\def\coend{\mathrm{CoEnd}}
 \global\long\def\coassoc{\mathrm{Coassoc}}
\global\long\def\homz{\mathrm{Hom}_{\ring}}
\global\long\def\homa{\mathrm{Hom}}
\global\long\def\zend{\mathrm{End}}
\global\long\def\rs#1{\mathrm{R}S_{#1 }}
\global\long\def\forgetful#1{\lceil#1\rceil}
\global\long\def\zs#1{\mathbb{Z}S_{#1 }}
\global\long\def\homzs#1{\mathrm{Hom}_{\ring S_{#1 }}}
\global\long\def\rings#1{\ring S_{#1}}
\global\long\def\zpi{\mathbb{Z}\pi}
\global\long\def\cf#1{\mathcal{C}(#1 )}
\global\long\def\ddelta{\dot{\Delta}}
\global\long\def\dimlimiter{\triangleright}
\global\long\def\coalgcat{\mathrf S_{0}}
\global\long\def\ccoalgcat{\mathrf S_{\mathrm{cell}}}
\global\long\def\hcoalgcat{\mathrf{S}}
\global\long\def\ircoalgcat{\mathrf I_{0}}
\global\long\def\bircoalgcat{\mathrf{I}_{0}^{+}}
\global\long\def\hircoalgcat{\mathrf I}
\global\long\def\chaincat{\mathbf{Ch}_{0}}
\global\long\def\coll{\mathrm{Coll}}
\global\long\def\ilimit{\varprojlim\,}
\global\long\def\dlimit{\varinjlim\,}
\global\long\def\dilimit{{\varprojlim}^{1}\,}
\global\long\def\spaces{\mathbf{S}_{0}}

\global\long\def\coker{\mathrm{{coker}}}
\global\long\def\lcell{L_{\mathrm{cell}}}
\global\long\def\ccoalgcat{\mathrf S_{\mathrm{cell}}}
\global\long\def\fc#1{\mathrm{hom}(\bigstar,#1)}
\global\long\def\coS{\mathbf{coS}}
\global\long\def\cocell{\mathbf{co}\ccoalgcat}
\global\long\def\realization{\mathcal{H}}
\global\long\def\totalspace#1{\mathrm{Tot}(#1)}
\global\long\def\totalcoalg#1{\mathrm{Tot}_{\ccoalgcat}(#1)}
\global\long\def\cofc#1{\mathbf{co}\fc{#1}}
\global\long\def\coforgetful#1{\mathbf{co}-\forgetful{#1}}
\global\long\def\cochaincat{\mathbf{co}\chaincat}
\global\long\def\zinfty#1{\integers_{\infty}#1}
\global\long\def\pgam{\tilde{\Gamma}}
\global\long\def\pz{\tilde{\integers}}
\global\long\def\sab{\mathbf{sAB}_{0}}
\global\long\def\homzp{\mathrm{Hom}_{\integers[C_{p}]}}
\global\long\def\S{\mathfrak{S}}
\global\long\def\finitecofree{\mathcal{F}_{\S}^{c}}
\global\long\def\rzp{\mathrm{R}(\integers/p\cdot\integers)}
\global\long\def\vp{\mathcal{V}_{p}}
\global\long\def\trivialcofree{\bar{\mathcal{F}_{\S}}}
\global\long\def\slength#1{|#1|}
\global\long\def\barcs{\bar{\mathcal{B}}}

\global\long\def\exp{\operatorname{exp}}

\global\long\def\ints{\mathbb{Z}}

\global\long\def\rats{\mathbb{Q}}

\global\long\def\qhat{\hat{Q}}

\global\long\def\moore#1{\{#1\}}
 
\begin{abstract}
This paper considers a class of coalgebras over the Barratt-Eccles
operad and shows that they classify $\ints$-completions of pointed,
reduced simplicial sets. As a consequence, they encapsulate the homotopy
types of nilpotent simplicial sets. This result is a direct generalization
of Quillen's result characterizing rational homotopy types via cocommutative
coalgebras.
\end{abstract}

\section{Introduction}

In \cite{Smith:1994} the author constructed coalgebra-structures
over the Barratt-Eccles operad on the integral chain-complex of a
pointed, reduced simplicial set. These coalgebra structures suffice
to compute all Steenrod operations (among other things). The present
paper shows that such coalgebras have an algebraic property called
cellularity (see definition~\ref{def:cellularcoalgebra}). Cellular
coalgebras are shown to have an analogue of the Hurewicz map (corollary~\ref{cor:classifying-map-space-hurewicz})
that allows us to construct their ``$\ints$-completion''. In the
case that a cellular coalgebra is topologically realizable, properties
of the space's $\ints$-completion can be derived from that of the
coalgebra.

The main technical results, theorem~\ref{thm:injectivity-theorem}
and corollary~\ref{cor:classifying-map-space-hurewicz}, imply that
cellular coalgebras have a ``Hurewicz map'' that precisely corresponds
to the topological Hurewicz map when a cellular coalgebra is topologically
realizable.

We use this to derive a cosimplicial resolution of a cellular coalgebra
and show that it is weakly equivalent to the $\ints^{\bullet}$-resolution
when the coalgebra is derived from a topological space.

Our final result is a kind of generalization of Quillen's main result
(in \cite{Quillen:1969}) characterizing rational homotopy types via
commutative coalgebras:

\medskip 

\emph{Corollary~\ref{cor:cf-classifies-nilpotent}: If $X$ and $Y$
are pointed nilpotent reduced simplicial sets, then $X$ is weakly
equivalent to $Y$ if and only if there exists a morphism of cellular
coalgebras
\[
f:\cf X\to\cf Y
\]
that is an integral homology equivalence.}

\medskip 

An earlier somewhat similar statement was published by Smirnov in
\cite{Smirnov:1985}, but his proof was unclear and he used an operad
that was uncountably generated in all dimensions. Smirnov's proof
was so unclear that several people known to the author believed the
result to be untrue. 

The the present paper's proof is a straightforward application of
simplicial resolutions --- involving the operad used to compute Steenrod
operations.

The reader might wonder why our coalgebras seem to encode more information
than structures nominally dual to them, like algebras. The answer
is that \emph{nilpotent} coalgebras%
\footnote{Roughly speaking, nilpotent coalgebras are ones in which iterated
coproduct ``peter out'' after a finite number of steps --- see \cite[chapter~3]{operad-book}
for the precise definition.%
} are dual to algebras --- and the coalgebras we consider are \emph{not}
nilpotent (see proposition~\ref{pro:simplicespropertyS}). The paper
\cite{Smith:cofree} showed that cofree coalgebras (see definition~\ref{def:cofreecoalgebra})
are \emph{not} duals to free \emph{algebras} --- they are somewhat
like ``profinite completions'' of them.

The duals of the coalgebras considered here are algebraic structures
that ``look like'' algebras but have the property that certain ``infinite
products'' are well-defined. If one avoids taking such infinite products
one gets the usual cohomology algebra that can be used to define Steenrod
operations, etc. This involves throwing out significant ``transcendental''
data.

\section{Definitions and assumptions}

We will denote the closed symmetric monoidal category of $\ring$-free
$\ring$-chain-complexes concentrated in positive dimensions by $\chaincat$. 

We make extensive use of the Koszul Convention (see~\cite{Gugenheim:1960})
regarding signs in homological calculations:
\begin{defn}
\label{def:koszul-1} If $f:C_{1}\to D_{1}$, $g:C_{2}\to D_{2}$
are maps, and $a\otimes b\in C_{1}\otimes C_{2}$ (where $a$ is a
homogeneous element), then $(f\otimes g)(a\otimes b)$ is defined
to be $(-1)^{\deg(g)\cdot\deg(a)}f(a)\otimes g(b)$. \end{defn}
\begin{rem}
If $f_{i}$, $g_{i}$ are maps, it isn't hard to verify that the Koszul
convention implies that $(f_{1}\otimes g_{1})\circ(f_{2}\otimes g_{2})=(-1)^{\deg(f_{2})\cdot\deg(g_{1})}(f_{1}\circ f_{2}\otimes g_{1}\circ g_{2})$.\end{rem}
\begin{defn}
\label{def:homcomplex-1}Given chain-complexes $A,B\in\chaincat$
define
\[
\homz(A,B)
\]
to be the chain-complex of graded $\ring$-morphisms where the degree
of an element $x\in\homz(A,B)$ is its degree as a map and with differential
\[
\partial f=f\circ\partial_{A}-(-1)^{\deg f}\partial_{B}\circ f
\]
As a $\ring$-module $\homz(A,B)_{k}=\prod_{j}\homz(A_{j},B_{j+k})$.\end{defn}
\begin{rem*}
Given $A,B\in\mathbf{Ch}^{S_{n}}$, we can define $\homzs n(A,B)$
in a corresponding way.\end{rem*}
\begin{defn}
\label{def:tmap}Let $\sigma\in S_{n}$ be an element of the symmetric
group and let $\{k_{1},\dots,k_{n}\}$ be $n$ nonnegative integers
with $K=\sum_{i=1}^{n}k_{i}$. Then $T_{k_{1},\dots,k_{n}}(\sigma)$
is defined to be the element $\tau\in S_{K}$ that permutes the $n$
blocks 
\[
(1,\dots,k_{1}),(k_{1}+1,\dots,k_{1}+k_{2})\dots(k-k_{n-1},\dots,k)
\]
as $\sigma$ permutes the set $\{1,\dots,n\}$.\end{defn}
\begin{rem}
Note that it is possible for one of the $k$'s to be $0$, in which
case the corresponding block is empty.\end{rem}
\begin{defn}
\label{def:operad}If $G$ is a discrete group, let $\chaincat^{G}$
denote the category of chain-complexes equipped with a right $G$-action.
This is again a closed symmetric monoidal category and the forgetful
functor $\chaincat^{G}\to\chaincat$ has a left adjoint, $(-)[G]$.
This applies to the symmetric groups, $S_{n}$, where we regard $S_{1}$
and $S_{0}$ as the trivial group. The \emph{category of collections}
is defined to be the product
\[
\mathrm{Coll}(\chaincat)=\prod_{n\ge0}\chaincat^{S_{n}}
\]
Its objects are written $\mathcal{V}=\{\mathcal{V}(n)\}_{n\ge0}$.
Each collection induces an endofunctor (also denoted $\mathcal{V}$)
$\mathcal{V}:\chaincat\to\chaincat$
\[
\mathcal{V}(X)=\bigoplus_{n\ge0}\mathcal{V}(n)\otimes_{\zs n}X^{\otimes n}
\]
where $X^{\otimes n}=X\otimes\cdots\otimes X$ and $S_{n}$ acts on
$X^{\otimes n}$ by permuting factors. This endofunctor is a \emph{monad}
if the defining collection has the structure of an \emph{operad},
which means that $\mathcal{V}$ has a unit $\eta:\ring\to\mathcal{V}(1)$
and structure maps
\[
\gamma_{k_{1},\dots,k_{n}}:\mathcal{V}(n)\otimes\mathcal{V}(k_{1})\otimes\cdots\otimes\mathcal{V}(k_{n})\to\mathcal{V}(k_{1}+\cdots+k_{n})
\]
satisfying well-known equivariance, associativity, and unit conditions
--- see \cite{Smith:cofree}, \cite{Kriz-May}.

We will call the operad $\mathcal{V}=\{\mathcal{V}(n)\}$ $\Sigma$-\emph{cofibrant}
if $\mathcal{V}(n)$ is $\zs n$-projective for all $n\ge0$.\end{defn}
\begin{rem*}
The operads we consider here correspond to \emph{symmetric} operads
in \cite{Smith:cofree}.

The term ``unital operad'' is used in different ways by different
authors. We use it in the sense of Kriz and May in \cite{Kriz-May},
meaning the operad has a $0$-component that acts like an arity-lowering
augmentation under compositions. Here $\mathcal{V}(0)=\ring$.

The term $\Sigma$-\emph{cofibrant} first appeared in \cite{Berger-moerdijk-axiom-operad}.
\end{rem*}
A simple example of an operad is:
\begin{example}
\label{example:frakS0}For each $n\ge0$, $X$, $C(n)=\integers S_{n}$,
with structure-map induced by
\[
\gamma_{\alpha_{1},\dots,\alpha_{n}}:S_{n}\times S_{\alpha_{1}}\times\cdots\times S_{\alpha_{n}}\to S_{\alpha_{1}+\cdots+\alpha_{n}}
\]
defined by regarding each of the $S_{\alpha_{i}}$ as permuting elements
within the subsequence $\{\alpha_{1}+\cdots+\alpha_{i-1}+1,\dots,\alpha_{1}+\cdots+\alpha_{i}\}$
of the sequence $\{1,\dots,\alpha_{1}+\cdots+\alpha_{n}\}$ and making
$S_{n}$ permute these $n$-blocks. This operad is denoted $\mathfrak{S}_{0}$.
In other notation, its $n^{\text{th}}$ component is the \emph{symmetric
group-ring} $\integers S_{n}$. See \cite{Smith:1994} for explicit
formulas.
\end{example}
For the purposes of this paper, the main example of an operad is
\begin{defn}
\label{def:coend}Given any $C\in\chaincat$, the associated \emph{coendomorphism
operad}, $\coend(C)$ is defined by
\[
\coend(C)(n)=\homz(C,C^{\otimes n})
\]
 Its structure map
\begin{multline*}
\gamma_{\alpha_{1},\dots,\alpha_{n}}:\\
\homz(C,C^{\otimes n})\otimes\homz(C,C^{\otimes\alpha_{1}})\otimes\cdots\otimes\homz(C,C^{\otimes\alpha_{n}})\to\\
\homz(C,C^{\otimes\alpha_{1}+\cdots+\alpha_{n}})
\end{multline*}
simply composes a map in $\homz(C,C^{\otimes n})$ with maps of each
of the $n$ factors of $C$. 

This is a non-unital operad, but if $C\in\chaincat$ has an augmentation
map $\varepsilon:C\to\ring$ then we can  regard $\epsilon$ as the
only element of $\homz(C,C^{\otimes n})=\homz(C,C^{\otimes0})=\homz(C,\ring)$.
\end{defn}
Morphisms of operads are defined in the obvious way:
\begin{defn}
\label{def:operadmorphism} Given two operads $\mathcal{V}$ and $\mathcal{W}$,
a \emph{morphism} 
\[
f:\mathcal{V}\to\mathcal{W}
\]
 is a sequence of chain-maps 
\[
f_{i}:\mathcal{V}_{i}\to\mathcal{W}_{i}
\]
 commuting with all the diagrams in \ref{def:operad}.
\end{defn}
Verification that this satisfies the required identities is left to
the reader as an exercise.
\begin{defn}
\label{def:sfrakfirstmention}Let $\mathfrak{S}$ denote the Barratt-Eccles
operad with components $K(S_{n},1)$ --- the bar resolutions of $\integers$
over $\zs n$ for all $n>0$. See \cite{Smith:1994} for formulas
for the composition-operations. Coalgebras over $\mathfrak{S}$ are
chain-complexes equipped with a coassociative coproduct and Steenrod
operations for all primes (see \cite{Smith:1994}).\end{defn}
\begin{rem*}
The operad $\S$ was first described in \cite{Barratt-Eccles-operad}. \end{rem*}
\begin{defn}
\label{def:coalgebra-over-operad}A chain-complex $C$ is a \emph{coalgebra
over the operad} $\mathcal{V}$ if there exists a morphism of operads
\[
\mathcal{V}\to\coend(C)
\]

\end{defn}
The structure of a coalgebra over an operad can be described in several
equivalent ways:
\begin{enumerate}
\item $f_{n}:\mathcal{V}(n)\otimes C\to C^{\otimes n}$
\item $g:C\to\prod_{n=0}^{\infty}\homzs n(\mathcal{V}(n),C^{\otimes n})$
\end{enumerate}
where both satisfy identities that describe how composites of these
maps are compatible with the operad-structure.

\subsection{Types of coalgebras}
\begin{example}
Coassociative coalgebras are precisely the coalgebras over $\mathfrak{S}_{0}$
(see \ref{example:frakS0}). \end{example}
\begin{defn}
\label{def:coassoc}$\mathbf{Cocommut}$ is an operad defined to have
one basis element $\{b_{i}\}$ for each integer $i\ge0$. Here the
rank of $b_{i}$ is $i$ and the degree is 0 and the these elements
satisfy the composition-law: $\gamma(b_{n}\otimes b_{k_{1}}\otimes\cdots\otimes b_{k_{n}})=b_{K}$,
where $K=\sum_{i=1}^{n}k_{i}$. The differential of this operad is
identically zero. The symmetric-group actions are trivial.\end{defn}
\begin{example}
Coassociative, commutative coalgebras are the coalgebras over $\mathbf{Cocommut}$.
\end{example}
We will sometimes want to focus on a particular class of $\mathcal{V}$-coalgebras:
the \emph{pointed, irreducible coalgebras}. We define this concept
in a way that extends the conventional definition in \cite{Sweedler:1969}:
\begin{defn}
\label{def:pointedirreducible} Given a coalgebra over a unital operad
$\mathcal{V}$ with adjoint structure-maps
\[
a_{n}:C\to\homzs n(\mathcal{V}(n),C^{\otimes n})
\]
for $n\ge0$, an element $c\in C$ is called \emph{group-like} if
$a_{n}(c)=\homz(\epsilon_{n},1)(1\mapsto c^{\otimes n})$ for all
$n>0$. Here $c^{\otimes n}\in C^{\otimes n}$ is the $n$-fold $\ring$-tensor
product and $\epsilon_{n}:\mathcal{V}(n)\to\mathcal{V}(0)=\ring$
is the augmentation (which is $n$-fold composition with $\mathcal{V}(0)$). 

A coalgebra $C$ over an operad $\mathcal{V}$ is called \emph{pointed}
if it has a \emph{unique} group-like element (denoted $1$), and \emph{pointed
irreducible} if the intersection of any two sub-coalgebras contains
this unique group-like element.\end{defn}
\begin{rem*}
Note that a group-like element generates a sub $\mathcal{V}$-coalgebra
of $C$ and must lie in dimension $0$.

Although seemingly contrived, this condition arises in ``nature'':
The chain-complex of a pointed, reduced simplicial set is naturally
a pointed irreducible coalgebra over the Barratt-Eccles operad, $\mathfrak{S}=\{C(K(S_{n},1))\}$
(see \cite{Smith:1994}). \end{rem*}
\begin{prop}
Let $D$ be a pointed, irreducible coalgebra over an operad $\mathcal{V}$.
Then the augmentation map
\[
\varepsilon:D\to\ring
\]
is naturally split and any morphism of pointed, irreducible coalgebras
\[
f:D_{1}\to D_{2}
\]
 is of the form
\[
1\oplus\bar{f}:D_{1}=\ring\oplus\ker\varepsilon_{D_{1}}\to D_{2}=\ring\oplus\ker\varepsilon_{D_{2}}
\]
where $\varepsilon_{i}:D_{i}\to\ring$, $i=1,2$ are the augmentations.\end{prop}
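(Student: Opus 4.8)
The plan is to obtain the splitting directly from the group-like element $1$, and then to read off the form of morphisms from the \emph{uniqueness} clause built into the word ``pointed.''

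First I would identify the augmentation with the arity-zero structure map: under the adjoint description $a_n\colon D\to\homzs n(\mathcal V(n),D^{\otimes n})$, the case $n=0$ gives $a_0\colon D\to\homzs 0(\mathcal V(0),D^{\otimes 0})=\homz(\ring,\ring)=\ring$, and this is precisely $\varepsilon$ — it is the arity-zero component of the operad morphism $\mathcal V\to\coend(D)$ that turns the non-unital $\coend(D)$ into a unital operad in Definition~\ref{def:coend}. The candidate splitting is $s\colon\ring\to D$, $1\mapsto 1$, and the only substantive point is that $\varepsilon(1)=1$. I would extract this from the unital-operad axioms: writing $f_n\colon\mathcal V(n)\otimes D\to D^{\otimes n}$ for the unadjointed structure maps, the group-like identity reads $f_n(v\otimes 1)=\epsilon_n(v)\cdot 1^{\otimes n}$, while the unit axiom gives $f_1(\eta(1)\otimes d)=d$ and $\epsilon_1(\eta(1))=1$. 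Applying the coalgebra compatibility for the composition $\gamma\colon\mathcal V(2)\otimes\mathcal V(1)\otimes\mathcal V(0)\to\mathcal V(1)$ to $c=1$, with $\eta(1)$ in the arity-one slot and $1\in\mathcal V(0)=\ring$ in the arity-zero slot, the right-hand side collapses via the $n=2$ group-like identity and the unit axiom to $\varepsilon(1)\cdot 1$, while the left-hand side equals $\epsilon_1\bigl(\gamma(v;\eta(1),1)\bigr)\cdot 1$; operad associativity and the unit rewrite $\epsilon_1\bigl(\gamma(v;\eta(1),1)\bigr)=\epsilon_2(v)$, so choosing $v$ with $\epsilon_2(v)=1$ yields $\varepsilon(1)=1$. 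Equivalently, one may simply note that a group-like element satisfies $\varepsilon(c)=1$ — the arity-zero instance of the defining identity — which is exactly what keeps $0$ from counting as group-like and makes ``pointed'' meaningful. Since $1$ lies in degree $0$ it is a cycle, and $\ker\varepsilon$ is a subcomplex (as $\varepsilon$ is a chain map to $\ring$, which carries zero differential), so $\varepsilon(1)=1$ produces a decomposition in $\chaincat$, namely $D=\ring\cdot 1\oplus\ker\varepsilon$, with projections $\varepsilon$ and $d\mapsto d-\varepsilon(d)\cdot 1$.

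Next, for a morphism $f\colon D_1\to D_2$ I would check that $f$ preserves group-like elements. Commutativity of $f$ with the structure maps is the identity $a_n^{D_2}\circ f=\homz(1,f^{\otimes n})\circ a_n^{D_1}$, and since $f^{\otimes n}(c^{\otimes n})=f(c)^{\otimes n}$, applying it to a group-like $c$ gives $a_n^{D_2}(f(c))=\homz(\epsilon_n,1)(1\mapsto f(c)^{\otimes n})$, i.e. $f(c)$ is again group-like. In particular $f(1_{D_1})$ is group-like in $D_2$, and the arity-zero compatibility $\varepsilon_{D_2}\circ f=\varepsilon_{D_1}$ gives $\varepsilon_{D_2}(f(1_{D_1}))=1$, so $f(1_{D_1})\neq 0$. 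By pointedness of $D_2$ — the uniqueness of its group-like element — this forces $f(1_{D_1})=1_{D_2}$, which is exactly the naturality of $s$.

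Finally I would assemble the two facts: $\varepsilon_{D_2}\circ f=\varepsilon_{D_1}$ sends $\ker\varepsilon_{D_1}$ into $\ker\varepsilon_{D_2}$, while $f(1_{D_1})=1_{D_2}$ identifies the two copies of $\ring$, so $f$ respects the direct-sum decomposition and has the block form $1\oplus\bar f$ with $\bar f=f|_{\ker\varepsilon_{D_1}}$. I expect the only real obstacle to be the first step — pinning down $\varepsilon(1)=1$ from the arity-$>0$ group-like identity together with the unital-operad axioms — since the stated definition imposes the group-like condition only for $n>0$, so the argument must pass through a genuine operad composition and use associativity and the unit to collapse $\epsilon_1\bigl(\gamma(v;\eta(1),1)\bigr)$ to $\epsilon_2(v)$ (for which surjectivity of $\epsilon_2$, as holds for $\mathfrak S_0$ and $\mathfrak S$, is convenient). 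Everything after that — naturality of the splitting and the block form of morphisms — is formal, resting only on uniqueness of the group-like element and the compatibility of morphisms with the augmentation.
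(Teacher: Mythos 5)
Your proof is correct and takes essentially the same route as the paper's (much terser) argument: the splitting comes from the unique group-like element, morphisms preserve it because the group-like condition is stated invariantly in terms of the structure maps and pointedness forces $f(1_{D_1})=1_{D_2}$, and morphisms preserve augmentations because $\varepsilon$ is the arity-zero structure map, whence $f(\ker\varepsilon_{D_1})\subseteq\ker\varepsilon_{D_2}$ and the block form follows. The only real difference is that you explicitly justify $\varepsilon(1)=1$ (via the operad composition $\mathcal{V}(2)\otimes\mathcal{V}(1)\otimes\mathcal{V}(0)\to\mathcal{V}(1)$, or by reading the group-like identity at arity zero), a point the paper's proof silently assumes when it treats $\ring\cdot 1$ as a complement of $\ker\varepsilon$.
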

\begin{proof}
The definition (\ref{def:pointedirreducible}) of the sub-coalgebra
$\ring\cdot1\subseteq D_{i}$ is stated in an invariant way, so that
any coalgebra morphism must preserve it. Any morphism must also preserve
augmentations because the augmentation is the $0^{\mathrm{th}}$-order
structure-map. Consequently, $f$ must map $\ker\varepsilon_{D_{1}}$to
$\ker\varepsilon_{D_{2}}$. The conclusion follows.\end{proof}
\begin{defn}
\label{def:pointedirredcat} We denote the \emph{category} of pointed
irreducible coalgebras over $\mathfrak{S}$ by $\coalgcat$. Every
such coalgebra, $C$, comes equipped with a canonical augmentation
\[
\varepsilon:C\to\ring
\]
so the \emph{terminal object} is $\ring$. If $\mathcal{V}$ is not
unital, the terminal object in this category is $0$, the null coalgebra.

The category of \emph{pointed irreducible coalgebras} over $\mathfrak{S}$
is denoted $\ircoalgcat$. Its terminal object is the coalgebra whose
underlying chain complex is $\ring$ concentrated in dimension $0$.
\end{defn}
We also need:
\begin{defn}
\label{def:forgetful}If $A\in\mathrf C=\ircoalgcat$ or $\coalgcat$,
then $\forgetful A$ denotes the underlying chain-complex in $\chaincat$
of
\[
\ker A\to t
\]
where $t$ denotes the terminal object in $\mathrf{C}$ --- see definition~
\ref{def:pointedirredcat}. We will call $\forgetful{\ast}$ the \emph{forgetful
functor} from $\mathrf{C}$ to $\chaincat$.
\end{defn}
We can define a concept dual to that of a free algebra generated by
a set: 
\begin{defn}
\label{def:cofreecoalgebra}Let $D$ be a coalgebra over an operad
$\mathfrak{S}$, equipped with a $\chaincat$-morphism $\varepsilon:\forgetful D\to E$,
where $E\in\chaincat$. Then $D$ is called \emph{the cofree coalgebra
over} $\mathfrak{S}$ \emph{cogenerated} \emph{by} $\varepsilon$
if any morphism in $\chaincat$
\[
f:\forgetful C\to E
\]
where $C\in\coalgcat$, induces a \emph{unique} morphism in $\coalgcat$
\[
\alpha_{f}:C\to D
\]
that makes the diagram 
\[
\xyC{40pt}\xymatrix{{\forgetful C}\ar[r]^{\forgetful{\alpha_{f}}}\ar[rd]_{f} & {\forgetful D}\ar[d]^{\varepsilon}\\
{} & {E}
}
\]
\emph{ }

Here $\alpha_{f}$ is called the \emph{classifying map} of $f$. If
$C\in\coalgcat$ then 
\[
\alpha_{f}:C\to L_{\mathfrak{S}}\forgetful C
\]
 will be called the \emph{classifying map of} $C$.
\end{defn}
This universal property of cofree coalgebras implies that they are
unique up to  isomorphism if they exist. 

The paper \cite{Smith:cofree} explicitly constructs cofree coalgebras
for many operads, including $\S$:
\begin{itemize}
\item $L_{\S}C$ is the general cofree coalgebra over the operad $\S$ ---
here, $C$, is a chain-complex that is not necessarily concentrated
in nonnegative dimensions.
\item $P_{\S}C$ is the \emph{pointed irreducible} cofree coalgebra for
$C$ (see definition~\ref{def:pointedirreducible} and \cite[section~4.2]{Smith:cofree}).
\end{itemize}
In all cases, definition~\ref{def:cofreecoalgebra} implies the existence
of an adjunction
\begin{equation}
\forgetful *:\chaincat\leftrightarrows\ircoalgcat:P_{\S}*\label{eq:cofree-adjunction}
\end{equation}
 where $\forgetful *:\ircoalgcat\to\chaincat$ is the forgetful functor.

\section{\label{sec:Free-operads}Cellular coalgebras}

We recall the following, from \cite[chapter~2, proposition~4.3]{Smith:1994}:
\begin{defn}
The functor 
\[
\cf *:\mathbf{S}\to\coalgcat
\]
from simplicial sets to $\mathfrak{S}$-coalgebras sends a simplicial
set to its chain-complex equipped with an $\mathfrak{S}$-coalgebra
structure defined via acyclic models on the simplices.

A $\S$-coalgebra, $D$, will be staid to be \emph{strongly realizable}
if $D\cong\cf X$ for some pointed, reduced simplicial set $X$.\end{defn}
\begin{rem*}
The $\mathfrak{S}$-coalgebra structure coincides with that used to
define Steenrod operations. More accurately, \cite[chapter~2]{Smith:1994}
shows that the ``higher coproducts'' used to define these operations
are part of an $\mathfrak{S}$-coalgebra structure.

We will also define a complementary functor \end{rem*}
\begin{defn}
\label{def:fc}Define a functor
\[
\fc *:\ircoalgcat\to\spaces
\]
as follows:

If $C\in\ircoalgcat$, define the $n$-simplices of $\fc C$ to be
the $\mathfrak{S}$-coalgebra morphisms
\[
\cf{\Delta^{n}}\to C
\]
where $\Delta^{n}$ is the standard $n$-simplex. Face-operators are
defined by inclusion of faces and degeneracies in a corresponding
fashion.\end{defn}
\begin{rem*}
The normalized chain complex satisfies: 
\begin{equation}
C(\Delta^{n})_{k}=\bigoplus_{\mathbf{n}\twoheadrightarrow\mathbf{k}}\ints=\ints^{{n+1 \choose n-k}}\label{eq:standard-simplex-complex}
\end{equation}
where $\mathbf{n}\twoheadrightarrow\mathbf{k}$ runs over all ordered
surjections
\[
[0,\dots,n]\to[0,\dots,k]
\]

This is similar to the definition of the Dold-Kan functor
\[
\Gamma:\chaincat\to\mathbf{sAb}
\]
to the category of simplicial abelian groups (see \cite[chapter~III, section~2]{Goerss-Jardine}),
the essential difference being that $\fc *$ takes $\S$-coalgebra
structures into account.\end{rem*}
\begin{lem}
\label{lem:fccfadjunction}The functors $\cf *$ and $\fc *$ define
an adjunction
\[
\cf *:\ircoalgcat\leftrightarrows\spaces:\fc *
\]
This gives rise to natural transformations (in the appropriate categories)
\begin{eqnarray*}
u_{X}:X & \to & \fc{\cf X}\\
w_{D}:\cf{\fc D} & \to & D
\end{eqnarray*}
\end{lem}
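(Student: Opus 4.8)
The plan is to produce a natural bijection $\homa_{\ircoalgcat}(\cf X,D)\cong\homa_{\spaces}(X,\fc D)$ exhibiting $\cf *$ as left adjoint to $\fc *$, and then to read off $u_X$ and $w_D$ as the images of the identity morphisms under this bijection. (Recall that on a pointed, reduced $X$ the coalgebra $\cf X$ is pointed irreducible, so $\cf *$ really does land in $\ircoalgcat$.) The whole argument rests on the fact that every simplicial set is canonically assembled from standard simplices and that $\cf *$ respects this assembly.

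First I would invoke the density, or co-Yoneda, presentation: $X$ is the colimit $\varinjlim_{\Delta^n\to X}\Delta^n$ over its category of simplices, so that $\homa_{\spaces}(X,Y)\cong\varprojlim_{\Delta^n\to X}Y_n$ naturally in $Y$. Taking $Y=\fc D$ and using definition~\ref{def:fc} together with the Yoneda lemma, which identifies $(\fc D)_n$ with $\homa_{\ircoalgcat}(\cf{\Delta^n},D)$, gives $\homa_{\spaces}(X,\fc D)\cong\varprojlim_{\Delta^n\to X}\homa_{\ircoalgcat}(\cf{\Delta^n},D)$. Unwound, a simplicial map $g\colon X\to\fc D$ is precisely a family of $\S$-coalgebra morphisms $g_n(x)\colon\cf{\Delta^n}\to D$, one for each $x\in X_n$, compatible with the face and degeneracy operators.

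The key step is to check that $\cf *$ carries the presentation of $X$ to one of $\cf X$, i.e. that $\cf X\cong\varinjlim_{\Delta^n\to X}\cf{\Delta^n}$ in $\ircoalgcat$; the inverse limit above is then, by the universal property of colimits, exactly $\homa_{\ircoalgcat}(\cf X,D)$, and chaining the isomorphisms finishes the bijection. For this I would use two facts. By the cofree adjunction~\ref{eq:cofree-adjunction} the forgetful functor $\forgetful *\colon\ircoalgcat\to\chaincat$ is a \emph{left} adjoint, hence colimit-preserving; and the reduced normalized chain functor $\forgetful{\cf *}=C(-)$ likewise preserves colimits, being computed degreewise by free $\ring$-modules. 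Since the $\S$-coalgebra structure of \cite{Smith:1994} is natural in $X$, the maps $\cf{\Delta^n}\to\cf X$ form a cocone whose underlying diagram of chain complexes already realizes $C(X)=\varinjlim_{\Delta^n\to X}C(\Delta^n)$; as $\forgetful *$ is faithful and (because the chain-inverse of a coalgebra isomorphism is again a coalgebra morphism) reflects isomorphisms, the comparison map $\varinjlim_{\Delta^n\to X}\cf{\Delta^n}\to\cf X$ is an isomorphism in $\ircoalgcat$.

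Finally I would extract the transformations. The unit $u_X\colon X\to\fc{\cf X}$ is the image of $\mathrm{id}_{\cf X}$; explicitly it sends $x\colon\Delta^n\to X$ to $\cf x\colon\cf{\Delta^n}\to\cf X$, an $n$-simplex of $\fc{\cf X}$. The counit $w_D\colon\cf{\fc D}\to D$ is the image of $\mathrm{id}_{\fc D}$; it is the morphism out of the colimit $\cf{\fc D}=\varinjlim\cf{\Delta^n}$ whose component at a simplex $\sigma\colon\cf{\Delta^n}\to D$ of $\fc D$ is $\sigma$ itself. Naturality of $u$ and $w$ together with the triangle identities then follow formally from naturality of the bijection. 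I expect the main obstacle to be precisely the key step: confirming that the acyclic-models coalgebra structure is compatible with gluing simplices --- equivalently, that colimits in $\ircoalgcat$ are computed on underlying chain complexes --- and attending to the basepoint bookkeeping (via the splitting $D=\ring\oplus\ker\varepsilon$ of the preceding proposition) so that the colimit is genuinely formed inside the pointed irreducible category.
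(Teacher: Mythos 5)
Your proposal is correct and takes essentially the same route as the paper: the paper's proof likewise establishes the bijection $\homa(\cf X,D)\cong\homa_{\spaces}(X,\fc D)$ by identifying a coalgebra morphism $\cf X\to D$ with a compatible family of morphisms $\cf{\Delta^{k}}\to D$ indexed by the simplices of $X$, which is precisely your density/colimit argument stated informally. Your categorical packaging (co-Yoneda plus the fact that colimits of coalgebras are computed on underlying chain complexes) simply makes rigorous the steps the paper labels ``not hard to see'' and ``straightforward.''
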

\begin{proof}
We must show that, for any $X\in\spaces$ and $D\in\ccoalgcat$ we
have a bijection
\[
\hom_{\ccoalgcat}(\cf X,D)=\hom_{\spaces}(X,\fc D)
\]
If we start on the left side, it is not hard to see that every morphism
$\cf X\to D$ can be regarded as a collection of morphisms $\cf{\Delta^{k}}\to D$
for each simplex $\Delta^{k}$ of $X$. This also defines a mapping
$X\to\fc D$. The converse argument is also straightforward.\end{proof}
\begin{prop}
\label{prop:cf(x)-maps-to-fccf}If $X$ is a pointed, reduced simplicial
set the adjunction in lemma~\ref{lem:fccfadjunction} implies the
existence of a coalgebra morphism
\[
\cf{u_{X}}:\cf X\to\cf{\fc{\cf X}}
\]
such that $w_{D}\circ\cf{u_{X}}=1:\cf X\to\cf X$. Given pointed,
reduced simplicial set $Y$, and a morphism of $\S$-coalgebras, 
\[
f:\cf X\to\cf Y
\]
the diagram\begingroup  
\[
\xymatrix{\xyC{40pt}{\cf{\fc{\cf X}}}\ar[r]^{\fc f} & {\cf{\fc{\cf Y}}}\\
{\cf X}\ar[u]^{\cf{u_{X}}}\ar[r]_{f} & {\cf Y}\ar[u]_{\cf{u_{Y}}}
}
\]
\endgroup commutes.\end{prop}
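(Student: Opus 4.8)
The statement splits into the section identity $w_{\cf X}\circ\cf{u_X}=1$ and the naturality square, and the plan is to extract both from the adjunction $\cf *\dashv\fc *$ of Lemma~\ref{lem:fccfadjunction}. For the section identity I would argue purely formally: $\cf{u_X}$ is the left adjoint $\cf *$ applied to the unit $u_X\colon X\to\fc{\cf X}$, and $w_{\cf X}$ is the counit at $\cf X$, so $w_{\cf X}\circ\cf{u_X}$ is one leg of the triangular (zig-zag) identity and hence equals $1_{\cf X}$. It helps to keep in mind the explicit description underlying this: $u_X$ sends an $n$-simplex $\sigma$ of $X$ to the coalgebra map $\cf\sigma\colon\cf{\Delta^n}\to\cf X$, while $w_{\cf X}$ sends the simplex of $\fc{\cf X}$ named by a coalgebra map $g\colon\cf{\Delta^n}\to\cf X$ back to the value of $g$ on the fundamental chain of $\cf{\Delta^n}$; the composite returns $\sigma$.

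For the square I would transpose both legs across the adjunction. The upper-left leg is $\cf{\fc f}\circ\cf{u_X}=\cf{(\fc f\circ u_X)}=\cf{\hat f}$, where $\hat f=\fc f\circ u_X\colon X\to\fc{\cf Y}$ is exactly the transpose of $f$ under the bijection $\hom_{\ircoalgcat}(\cf X,\cf Y)\cong\hom_{\spaces}(X,\fc{\cf Y})$. Thus commutativity is the assertion $\cf{u_Y}\circ f=\cf{\hat f}$, and transposing once more (using naturality of the unit $u$ with respect to the genuine simplicial map $\hat f$) turns this into the single equality $\fc{\cf{u_Y}}\circ\hat f=u_{\fc{\cf Y}}\circ\hat f$. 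So it is enough to show that the two maps $\fc{\cf{u_Y}}$ and $u_{\fc{\cf Y}}$ from $\fc{\cf Y}$ to $\fc{\cf{\fc{\cf Y}}}$ agree along the image of $\hat f$.

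The essential difficulty is that $f$ is an \emph{arbitrary} morphism of $\S$-coalgebras, not necessarily of the form $\cf g$ for a simplicial map $g$, so naturality of $u$ cannot be quoted directly for $f$ itself. I would circumvent this by reducing to representables: since $\cf *$ is a left adjoint it preserves colimits and $\cf X=\dlimit\cf{\Delta^n}$ (the colimit taken over the simplices of $X$), so two coalgebra morphisms out of $\cf X$ coincide as soon as they agree after restriction along every structure map $\cf{\Delta^n}\to\cf X$. This reduces the square to the case $X=\Delta^n$ with $f$ replaced by an arbitrary coalgebra map $\zeta\colon\cf{\Delta^n}\to\cf Y$, where both legs can be evaluated on the fundamental chain of $\cf{\Delta^n}$ using the explicit formulas for $u$ and $w$ recorded above. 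I expect this base case to be the crux: it forces one to use that the simplices of $\fc{\cf Y}$ are precisely the representing data for the functor $\hom_{\ircoalgcat}(\cf{\Delta^{\bullet}},\cf Y)$ --- that is, the full force of the adjunction rather than merely the underlying chain maps --- in order to identify the single simplex of $\fc{\cf Y}$ named by $\zeta$ with the datum read off through $u_Y$. Everything outside this base case is formal bookkeeping with the unit, the counit, and the functoriality of $\cf *$.
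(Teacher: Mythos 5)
Your formal outer layer is fine: the section identity $w_{\cf X}\circ\cf{u_X}=1$ is exactly a triangle identity for the adjunction of Lemma~\ref{lem:fccfadjunction}, and both of your reductions --- transposing the square across the adjunction, then writing $\cf X=\dlimit\cf{\Delta^n}$ and restricting along the structure maps --- are legitimate. But note what they accomplish: after them you are standing exactly at the pointwise statement with which the paper's own (one-line) proof begins, namely: given an arbitrary $\S$-coalgebra morphism $\zeta\colon\cf{\Delta^n}\to\cf Y$, write $\zeta([\Delta^n])=\sum_i a_i[y_i]$ with $y_i$ simplices of $Y$; then one must prove
\[
[\zeta]=\sum_i a_i\,[\cf{y_i}]\qquad\text{in }C_n(\fc{\cf Y}),
\]
where the left-hand side is a \emph{single generator} (the $n$-simplex of $\fc{\cf Y}$ named by $\zeta$) and the right-hand side is a $\ints$-linear combination of generators. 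You correctly flag this as the crux, but you then propose to settle it by ``the full force of the adjunction,'' and that is where the gap lies: the adjunction --- indeed the very definition of $\fc *$ (Definition~\ref{def:fc}) --- only says that simplices of $\fc{\cf Y}$ \emph{are} coalgebra morphisms $\cf{\Delta^n}\to\cf Y$. It gives no reason why a single generator should equal a linear combination of a priori several distinct generators; for a generic species of coalgebra such an equality is simply false.

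What makes it true here is the rigidity of $\S$-coalgebra morphisms coming from the higher coproducts, i.e.\ Proposition~\ref{pro:simplicespropertyS} (the same input that drives Theorem~\ref{thm:injectivity-theorem}). Compatibility of $\zeta$ with $\Delta(e_n\otimes-)$, together with $\Delta(e_n\otimes[\sigma])=\pm[\sigma]\otimes[\sigma]$ for $n$-simplices $\sigma$, yields $\sum_{i,j}a_ia_j[y_i]\otimes[y_j]=\sum_i a_i[y_i]\otimes[y_i]$, which forces all $a_i$ to vanish except at most one, and that one to equal $1$; compatibility with the ordinary (Alexander--Whitney) coproduct then matches up the faces, so that $\zeta=\cf y$ for that single simplex $y$ (the case $\zeta([\Delta^n])=0$, corresponding to degenerate simplices, needs separate handling). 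Only with this non-formal input does your base case --- and hence the square --- close. In outline your route then coincides with the paper's proof, which is precisely this chase on generators (``if $f(x)=\sum a_iy_i$\ldots''); but the one step you delegate to category-theoretic generalities is the only step with actual mathematical content, and it cannot be obtained that way.
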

\begin{rem*}
If $D\in\ircoalgcat$, the existence of a morphism $D\to\cf{\fc D}$
splitting the canonical morphism $\cf{\fc D}\to D$ is a \emph{necessary}
condition for $D$ to be strongly realizable.

The (slightly) interesting thing about this diagram is that its vertical
maps depend on the topological realizability of $\cf X$ and $\cf Y$
but the horizontal maps need not be topologically realizable.\end{rem*}
\begin{proof}
If $x\in X$ is a simplex and $f(x)=\sum a_{i}y_{i}$ for simplices
$y_{i}\in Y$, then a simple diagram-chase confirms the conclusion.\end{proof}
\begin{defn}
\label{def:pgam}If $C$ is a chain-complex with augmentation $\epsilon:C\to\integers$,
define $\pgam C$ to the $\Gamma\ker\epsilon$, where $\Gamma*$ is
the Dold-Kan functor (see \cite[chapter~III, section~2]{Goerss-Jardine}).
If $C\in\chaincat$, set $\pgam C=\Gamma C$.\end{defn}
\begin{rem*}
This is just a pointed version of the Dold-Kan functor. 
\end{rem*}
We also need some basic properties of simplicial abelian groups. The
following is a direct consequence of the Dold-Kan construction:
\begin{prop}
\label{prop:free-gives-free-gamma}If $D$ is a $\ints$-free chain-complex,
then $\pgam D$ is a $\ints$-free simplicial abelian group.
\end{prop}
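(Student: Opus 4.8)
The plan is to reduce the statement to a basic, well-known property of the Dold--Kan correspondence, namely that it carries free chain-complexes to levelwise-free simplicial abelian groups. First I would unwind the definition~\ref{def:pgam} of $\pgam D$. Since $D\in\chaincat$ is concentrated in nonnegative dimensions, there is no augmentation to quotient by in the relevant range, so $\pgam D=\Gamma D$, and the claim becomes: if $D$ is $\ints$-free as a chain-complex, then $\Gamma D$ is a $\ints$-free simplicial abelian group, i.e. each group of $n$-simplices $(\Gamma D)_{n}$ is a free abelian group.

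The key step is to recall the explicit formula for the Dold--Kan functor. For a chain-complex $D$, the simplicial abelian group $\Gamma D$ has
\[
(\Gamma D)_{n}=\bigoplus_{[n]\twoheadrightarrow[k]}D_{k},
\]
where the sum runs over all order-preserving surjections $[n]\twoheadrightarrow[k]$ in the simplicial category (see \cite[chapter~III, section~2]{Goerss-Jardine}). This is exactly the combinatorial bookkeeping that encodes the degenerate and nondegenerate simplices. I would then observe that each summand $D_{k}$ is, by hypothesis, a free abelian group, and a finite (indeed the indexing set of surjections out of $[n]$ is finite) direct sum of free abelian groups is again free abelian. Hence $(\Gamma D)_{n}$ is $\ints$-free for every $n$, which is precisely the assertion.

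I do not expect a genuine obstacle here, so the ``hard part'' is really just a matter of correctly invoking the standard description of $\Gamma$ and confirming that freeness is preserved under the relevant direct-sum operation. The one point that warrants a line of care is the boundary case in definition~\ref{def:pgam}: when $C$ carries an augmentation one works with $\Gamma\ker\epsilon$ rather than $\Gamma C$, but since $D$ is assumed to lie in $\chaincat$ (positive dimensions, $\ints$-free), the definition routes us to $\pgam D=\Gamma D$ directly, and $\ker\epsilon$ plays no role. I would close by remarking that the simplicial structure maps of $\Gamma D$ (faces and degeneracies) are induced by the simplicial operators acting on the indexing surjections, so they automatically respect the direct-sum decomposition and no compatibility needs to be checked beyond the freeness of the underlying groups.
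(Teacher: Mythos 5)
Your proof is correct and matches the paper's treatment: the paper offers no argument at all, stating the proposition as ``a direct consequence of the Dold--Kan construction,'' and your appeal to the explicit formula $(\Gamma D)_{n}=\bigoplus_{[n]\twoheadrightarrow[k]}D_{k}$ together with the fact that direct sums of free abelian groups are free is exactly the detail being elided. Your handling of the boundary case in definition~\ref{def:pgam} (that $D\in\chaincat$ forces $\pgam D=\Gamma D$) is also the right reading.
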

We also need
\begin{defn}
\label{def:moore-complex}If $A$ is a pointed, reduced simplicial
abelian group, $\moore A\in\chaincat$ is its associated Moore complex.\end{defn}
\begin{rem*}
Recall that the very simplices of a simplicial abelian group constitute
a chain-complex --- the Moore complex. The functor $\moore *$ is
a ``forgetful'' functor that forgets the extra structure (i.e.,
face and degeneracy maps) of a simplicial set. If $X$ is a simplicial
set, $\moore{\pz X}=C(X)$, the integral chain complex.
\end{rem*}
The following is probably well-known, but we will use it heavily:
\begin{lem}
\label{lem:gammacorrespondence}If $X\in\spaces$ is a pointed, reduced
simplicial set, there is a natural trivial fibration
\[
\gamma_{X}:\pgam C(X)\to\pz X
\]
and a natural trivial cofibration
\[
\iota_{X}:\pz X\to\pgam C(X)
\]
of simplicial abelian groups, where $C(X)$ is the (integral) chain-complex
and $\pz X=\integers X/\integers*$ --- the pointed version of the
free abelian group functor. In addition, $\gamma_{X}\circ\iota_{X}=1:\pz X\to\pz X$.\end{lem}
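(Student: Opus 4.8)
The plan is to construct the natural maps $\gamma_X$ and $\iota_X$ directly from the Dold-Kan machinery and then verify their homotopy-theoretic properties. First I would recall the two standard comparisons between a simplicial abelian group and its Moore complex. For any pointed, reduced simplicial set $X$, the free simplicial abelian group $\pz X$ has Moore complex $\moore{\pz X}=C(X)$, the normalized integral chain complex. Applying the pointed Dold-Kan functor $\pgam$ to this chain complex produces $\pgam C(X)$. The point is that both $\pz X$ and $\pgam C(X)$ have the same Moore complex (up to the normalization/unnormalization identification), so they are related by the canonical natural comparison maps supplied by the Dold-Kan correspondence.

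Next I would make these maps explicit. The Dold-Kan functor $\Gamma$ reconstructs a simplicial abelian group from a chain complex by taking $\Gamma(C)_n=\bigoplus_{[n]\twoheadrightarrow[k]}C_k$, summing over order-preserving surjections. There is a natural surjection $\gamma_X\colon\pgam C(X)\to\pz X$ obtained from the fact that $\pz X$ is the ``unnormalized'' object whose normalization is $C(X)$; concretely it collapses the degenerate summands that $\Gamma$ introduces onto the honest simplices of $X$. In the other direction, $\iota_X\colon\pz X\to\pgam C(X)$ is the natural section picking out the subcomplex of nondegenerate generators. Both are natural in $X$ because they are assembled from the natural transformations of the Dold-Kan equivalence, which is functorial.

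The key computation is that $\gamma_X\circ\iota_X=1$ on $\pz X$, which should follow immediately from the construction: $\iota_X$ embeds the generators and $\gamma_X$ projects them back identically, since the nondegenerate part is carried isomorphically. This retraction identity, combined with the fact that both $\pz X$ and $\pgam C(X)$ realize the same Moore complex, forces $\gamma_X$ to be a weak equivalence (it induces an isomorphism on all homotopy groups, because $\pi_n$ of a simplicial abelian group is computed by its Moore complex homology). I would then invoke the model structure on simplicial abelian groups, in which weak equivalences are detected on Moore homology and all objects are fibrant: a natural weak equivalence that is split surjective is a trivial fibration, and its splitting $\iota_X$ is correspondingly a trivial cofibration (the composite being the identity makes $\iota_X$ a section of an acyclic fibration, hence a weak equivalence, and it is injective, hence a cofibration of $\ints$-free objects by proposition~\ref{prop:free-gives-free-gamma}).

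The main obstacle will be pinning down precisely why $\gamma_X$ is a \emph{fibration} and $\iota_X$ a \emph{cofibration}, as opposed to merely weak equivalences, since the fibration/cofibration structure is sensitive to the chosen model category conventions. I expect the resolution to be that in the standard model structure on simplicial abelian groups every epimorphism with levelwise free kernel is a fibration and every monomorphism with $\ints$-free cokernel is a cofibration; the freeness needed here is guaranteed by proposition~\ref{prop:free-gives-free-gamma} together with the observation that $C(X)$ is $\ints$-free on nondegenerate simplices. Once the degenerate-summand bookkeeping of $\Gamma$ is organized, the retraction $\gamma_X\circ\iota_X=1$ and the two structural claims follow essentially formally.
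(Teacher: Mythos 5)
Your proposal is correct and takes essentially the same route as the paper: there, $\gamma_{X}$ and $\iota_{X}$ are simply $\pgam$ applied to the natural surjection $C(X)\to NC(X)=C(X)/DC(X)$ and to its natural splitting $NC(X)\to C(X)$, using the Dold--Kan identification $\pgam NC(X)=\pz X$, so the retraction identity $\gamma_{X}\circ\iota_{X}=\pgam(1)=1$ and the trivial fibration/cofibration claims follow exactly as you outline. One small caution: in the paper's conventions $\moore{\pz X}=C(X)$ is the \emph{unnormalized} chain complex (not the normalized one), and the cleanest way to see that $\gamma_{X}$ is a weak equivalence is that it is $\pgam$ of the homology isomorphism $C(X)\to NC(X)$ (the degenerate subcomplex $DC(X)$ being acyclic and naturally split off), rather than your ``split surjection with abstractly isomorphic homotopy groups'' argument, which by itself would not rule out an acyclic-looking but nontrivial kernel.
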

\begin{rem*}
It well-known that $\pgam$ defines an equivalence of categories where
$\sab$ is the category of pointed reduced simplicial abelian groups
--- see \cite[chapter III]{Goerss-Jardine}.\end{rem*}
\begin{proof}
Consider the surjective homology equivalence
\[
\integers X=C(X)\to C(X)/DC(X)=NC(X)
\]
where $DC(X)$ is the subcomplex generated by degenerates and $NC(X)$
is the normalized chain-complex. Now, regard $\integers X$ as a chain-complex
and take $\pgam(*)$ of this surjection. We get
\[
\pgam C(X)\to\pgam NC(X)=\pz X
\]
by the Dold-Kan correspondence --- see \cite[chapter III]{Goerss-Jardine},
corollary~2.3, theorem~2.5, and corollary~2.12. The second statement
follows by similar reasoning, using the split inclusion
\[
NC(X)\to C(X)
\]
Both maps clearly preserve the abelian group structure.
\end{proof}
Recall that there is an adjunction
\begin{equation}
C(*):\chaincat\leftrightarrows\spaces:\Gamma*\label{eq:chain-gamma-adjunction}
\end{equation}
where $C(*)$ is the integral chain-complex functor (see \cite[chapter~III]{Goerss-Jardine}).
\begin{defn}
\label{def:pcmap}If $C\in\chaincat$, define $p_{C}:\forgetful{\cf{\pgam C}}\to C$
to be the chain-map that corresponds to the \emph{identity map} $1:\pgam D\to\pgam D$
in \ref{eq:chain-gamma-adjunction}, i.e.
\[
\hom_{\chaincat}(C(\pgam D),D)=\hom_{\spaces}(\pgam D,\pgam D)
\]

The chain-map $p_{C}$ induces a canonical coalgebra-morphism (via
the adjunction in equation~\ref{eq:cofree-adjunction}):
\[
a_{C}:\cf{\pgam C}\to P_{\mathfrak{S}}(C)
\]
where $P_{\S}(C)$ is the pointed, irreducible cofree coalgebra constructed
in \cite{Smith:cofree}.\end{defn}
\begin{rem*}
The \emph{other} map defined by the adjunction in \ref{eq:chain-gamma-adjunction}
is
\[
X\to\pgam\forgetful{\cf X}
\]
which is essentially the Hurewicz map.
\end{rem*}
The maps $p_{C}$ allow us to define \emph{cellular coalgebras}: 
\begin{defn}
\label{def:cellularcoalgebra}If $a_{D}:\cf{\pgam\forgetful D}\to P_{\mathfrak{S}}\forgetful D$
is the map in definition~\ref{def:pcmap}, define
\[
\lcell\forgetful D=\mathrm{im}\,\cf{\pgam\forgetful D}\subset P_{\S}\forgetful D
\]
A $\integers$-free pointed, irreducible $\mathfrak{S}$-coalgebra,
$D$, will be called \emph{cellular} if its classifying map (see definition~\ref{def:cofreecoalgebra}
and equation~\ref{eq:cofree-adjunction}) satisfies 
\[
\beta_{D}:D\to\lcell D\subset P_{\mathfrak{S}}\forgetful D
\]
 The category of cellular coalgebras will be denoted $\ccoalgcat$.
\emph{Morphisms} of cellular coalgebras will simply be $\mathfrak{S}$-coalgebra
morphisms.\end{defn}
\begin{rem*}
Cellular coalgebras are necessarily concentrated in nonnegative dimensions. 
\end{rem*}
Definition~\ref{def:cofreecoalgebra} and the \emph{uniqueness} of
the morphism $\alpha_{f}$ imply that:
\begin{thm}[Rigidity Theorem]
\label{thm:(Rigidity-theorem)} Let $C\in\ccoalgcat$, let $D\in\chaincat$
and let $f:\forgetful C\to D$ be any chain-map. Then there exists
a unique $\mathfrak{S}$-coalgebra morphism $\hat{f}:C\to\lcell D$
that makes the diagram
\[
\xymatrix{{\forgetful C}\ar[r]^{{\forgetful{\hat{f}}}\quad}\ar[rd]_{f} & {\lcell D}\ar[d]^{p_{D}}\\
{} & {D}
}
\]
commute. It follows that we have an adjunction
\[
\forgetful *:\chaincat\leftrightarrows\ccoalgcat:\lcell*
\]
where $\forgetful *:\ccoalgcat\to\chaincat$ is the forgetful functor
(compare this with equation~\ref{eq:cofree-adjunction}).\end{thm}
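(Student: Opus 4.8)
The plan is to build $\hat f$ from the cofree adjunction~\eqref{eq:cofree-adjunction} and then use cellularity of $C$ to force its image into the subcoalgebra $\lcell D$. Since $P_{\S}D$ is cofree on its cogenerating map $\varepsilon_D\colon\forgetful{P_{\S}D}\to D$, the chain-map $f\colon\forgetful C\to D$ determines a \emph{unique} $\S$-coalgebra morphism $\alpha_f\colon C\to P_{\S}D$ with $\varepsilon_D\circ\forgetful{\alpha_f}=f$. Recall $\lcell D=\mathrm{im}\,a_D\subset P_{\S}D$ and that $a_D$ is the classifying map of $p_D$, so $\varepsilon_D\circ\forgetful{a_D}=p_D$; hence the map labelled $p_D$ in the diagram is exactly the restriction of $\varepsilon_D$ to $\forgetful{\lcell D}$. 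Thus it suffices to prove $\alpha_f(C)\subseteq\lcell D$, after which $\hat f$ is $\alpha_f$ corestricted to $\lcell D$, and the triangle commutes automatically from $\varepsilon_D\circ\forgetful{\alpha_f}=f$.

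The heart of the argument is the factorization $\alpha_f=P_{\S}(f)\circ\beta_C$, which holds because $\beta_C$ is the unit of~\eqref{eq:cofree-adjunction} and $\alpha_f$ is the adjunct of $f$. I would first check that the assignment $E\mapsto a_E$ is natural, i.e.\ that the square with top and bottom edges $a_{\forgetful C}$ and $a_D$, left edge $\cf{\pgam f}$, and right edge $P_{\S}(f)$ commutes; this reduces to naturality of the maps $p_{(-)}$ of definition~\ref{def:pcmap} (built from the counit of the adjunction~\eqref{eq:chain-gamma-adjunction}) together with the uniqueness of classifying maps. Naturality then gives $P_{\S}(f)\bigl(\lcell\forgetful C\bigr)=P_{\S}(f)\bigl(\mathrm{im}\,a_{\forgetful C}\bigr)=\mathrm{im}\bigl(a_D\circ\cf{\pgam f}\bigr)\subseteq\mathrm{im}\,a_D=\lcell D$. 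Since $C$ is cellular, $\beta_C(C)\subseteq\lcell\forgetful C$ by definition~\ref{def:cellularcoalgebra}, so $\alpha_f(C)=P_{\S}(f)\bigl(\beta_C(C)\bigr)\subseteq\lcell D$, as required.

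Uniqueness follows from the fact that $\lcell D\hookrightarrow P_{\S}D$ is a monomorphism of coalgebras: any $\hat g\colon C\to\lcell D$ with $p_D\circ\forgetful{\hat g}=f$ composes with the inclusion to a coalgebra morphism $C\to P_{\S}D$ lying over $f$, which must equal $\alpha_f$ by the cofree universal property, and the inclusion being monic forces $\hat g=\hat f$. Finally, existence and uniqueness of $\hat f$ provide, for each cellular $C$ and each $D\in\chaincat$, a bijection $\hom_{\chaincat}(\forgetful C,D)\cong\hom_{\ccoalgcat}(C,\lcell D)$; naturality in both variables is a routine diagram-chase once one knows $\lcell$ is functorial, which the naturality square above supplies. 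The step I expect to require the most care is precisely this functoriality of $\lcell$ together with the verification that $\lcell D$ is genuinely an object of $\ccoalgcat$ (that is, $\ints$-free, pointed irreducible, and cellular): $\ints$-freeness comes from $\lcell D$ being a subcomplex of the $\ints$-free complex $\forgetful{P_{\S}D}$, while its own cellularity must be extracted from the surjection $\cf{\pgam D}\twoheadrightarrow\lcell D$ and the universal property rather than assumed.
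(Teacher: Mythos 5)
Your proposal is correct and follows essentially the same route as the paper's own proof: obtain $\alpha_f\colon C\to P_{\S}D$ from the cofree universal property, use cellularity of $C$ together with the naturality square relating $a_{\forgetful C}$, $a_D$, $\cf{\pgam f}$, and $P_{\S}f$ to corestrict it to $\lcell D$, and deduce uniqueness from the uniqueness of the composite with the inclusion $\lcell D\hookrightarrow P_{\S}D$. You merely spell out steps the paper leaves implicit (the factorization $\alpha_f=P_{\S}(f)\circ\beta_C$, the identification of $p_D$ with the restricted cogeneration map, and the check that $\lcell D$ lies in $\ccoalgcat$), which is a welcome amplification rather than a different argument.
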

\begin{proof}
Definition~\ref{def:cofreecoalgebra} implies that there exists a
unique map $\hat{f}:C\to P_{\S}D$ making the diagram 
\[
\xymatrix{{\forgetful C}\ar[r]^{{\forgetful{\hat{f}}}\quad}\ar[rd]_{f} & {\forgetful{P_{\S}D}}\ar[d]^{p_{D}}\\
{} & {D}
}
\]
commute. Here $P_{\S}D$ is the pointed, irreducible cofree coalgebra
cogenerated by $D$ (see \cite{Smith:cofree}). The cellularity of
$C$ implies that the unique map $C\to P_{\S}\forgetful C$ has its
image in $\lcell\forgetful C\subset P_{\S}\forgetful C$ and conclusion
follows from the diagram
\[
\xymatrix{{\lcell\forgetful C}\ar[r]^{a_{\forgetful C}}\ar[d]_{\cf{\pgam f}} & {P_{\S}\forgetful C}\ar[d]^{P_{\S}f}\\
{\lcell D}\ar[r]_{a_{D}} & {P_{\S}D}
}
\]
which shows that there is a coalgebra-morphism $C\to\pgam D$ covering
$f$. The conclusion follows from the fact that its composite with
the inclusion $\lcell D\to P_{\S}D$ is \emph{unique.}
\end{proof}
One of the key ideas in this paper is:
\begin{thm}[Injectivity Theorem]
\label{thm:injectivity-theorem}If $D\in\chaincat$, then the map
of $\S$-coalgebras induced by the $\ints$-linear extension of the
set-map of generators 
\[
\gamma_{D}:\cf{\pgam D}\to P_{\S}\moore{\pgam D}
\]
 is injective.\end{thm}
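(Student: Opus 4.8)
The plan is to describe $\gamma_D$ through the components of a classifying map and then to detect elements of its kernel dimension by dimension. Recall that $\gamma_D$ is the classifying map $\alpha_f$ of the chain-map $f\colon\forgetful{\cf{\pgam D}}\to\moore{\pgam D}$ sending a non-degenerate simplex $[a]$ (with $a\in(\pgam D)_m$) to the corresponding group element $a$, extended $\ints$-linearly. By the universal property in definition~\ref{def:cofreecoalgebra}, for $c\in\cf{\pgam D}$ the value $\gamma_D(c)$ is completely recorded by the family $\{\,f^{\otimes n}\circ a_n(c)\,\}_{n\ge 1}$, where $a_n\colon\cf{\pgam D}\to\homzs n(\S(n),\cf{\pgam D}^{\otimes n})$ are the adjoint structure-maps. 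Since $\S(1)=\ints$ and $a_1$ is the identity, the arity-one component is $f$ itself, so $\ker\gamma_D\subseteq\ker f$. Equivalently, writing $\gamma_D=P_{\S}(f)\circ\beta$ with $\beta\colon\cf{\pgam D}\to P_{\S}\forgetful{\cf{\pgam D}}$ the classifying map of the identity (which is split injective, since $\varepsilon\circ\forgetful\beta=1$), the assertion is that the \emph{non}-injective map $P_{\S}(f)$ nevertheless restricts to an injection on the sub-coalgebra $\beta(\cf{\pgam D})$. The whole difficulty is that $\ker f$ is large: it is generated by the additivity relations $[a+b]-[a]-[b]$ forced by the abelian-group structure of $\pgam D$.

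First I would set up an induction on dimension. Because $\pgam D=\Gamma D$ is reduced, $\cf{\pgam D}$ is concentrated in positive dimensions, so the induction begins at $m=1$. For the inductive step take $c\in\ker\gamma_D$ in dimension $m$ and apply the arity-two structure-map on the degree-zero generator of $\S(2)$, i.e. the Alexander--Whitney coproduct $\Delta$. Since $\gamma_D$ is a coalgebra morphism it commutes with $\Delta$, so $(\gamma_D\otimes\gamma_D)(\bar\Delta c)=0$, where $\bar\Delta$ is the reduced coproduct landing in $\bigoplus_{0<i<m}\cf{\pgam D}_i\otimes\cf{\pgam D}_{m-i}$. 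Each tensor factor sits in dimension $<m$; since each $\cf{\pgam D}_i$ is $\ints$-free and (by the construction in \cite{Smith:cofree}) the cofree coalgebra on a free complex is again $\ints$-free, the inductive hypothesis makes $\gamma_D\otimes\gamma_D$ injective there. Hence $\bar\Delta c=0$ and $c$ is primitive with respect to the ordinary coproduct.

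The crux is the primitive case, and here the full Barratt--Eccles ($E_{\infty}$) structure — not merely coassociativity — must intervene. The reduction above can never reach $\ker f$: a relation such as $[a+b]-[a]-[b]$ is already primitive (at $m=1$ \emph{every} chain is primitive), so $\Delta$ does not see it. Instead I would evaluate $a_2$ on the degree-one generator $e_1\in\S(2)_1$, producing the cup-one coproduct $\Delta_1\colon\cf{\pgam D}_m\to(\cf{\pgam D}\otimes\cf{\pgam D})_{m+1}$, together with its higher analogues and the higher-arity maps $a_n$. These operations are \emph{polynomial} in the group element: the relevant component of $f^{\otimes 2}\Delta_1[a]$ is a quadratic expression of the form $a\otimes a$, so $f^{\otimes 2}\Delta_1\bigl([a+b]-[a]-[b]\bigr)$ has leading term $a\otimes b+b\otimes a\neq 0$. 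The plan is to organize the $\Delta_i$ and the symmetrized arity-$n$ operations as a system of ``coordinates'' whose leading symbols recover the additive relations, so that no nonzero primitive element can lie in $\ker f$.

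The main obstacle is precisely this last step: controlling the leading terms of the higher cup-$i$ coproducts and arity-$n$ power operations well enough to prove that they separate the primitive elements of $\ker f$. I expect to dispatch it by naturality. Every $D\in\chaincat$ is a filtered colimit of finitely generated free subcomplexes; $\cf{\pgam(-)}$ preserves such colimits, while $P_{\S}$, being a right adjoint, preserves the monomorphisms $\moore{\pgam D_\alpha}\hookrightarrow\moore{\pgam D}$, so injectivity of each $\gamma_{D_\alpha}$ yields injectivity of $\gamma_D$. It therefore suffices to treat a finitely generated free model, on which the Barratt--Eccles coproducts are given by the explicit acyclic-models formulas of \cite{Smith:1994} and the leading-symbol computation becomes a finite, if intricate, verification. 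A secondary bookkeeping point, to be checked but not expected to cause trouble, is that degeneracies in $\pgam D$ (where $a+b$ may be degenerate though $a$ and $b$ are not) interact correctly with the normalized complex $\cf{\pgam D}$.
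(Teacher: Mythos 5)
Your reductions track the paper's own strategy up to a point: you correctly identify that $\ker\gamma_{D}\subseteq\ker f$, that $\ker f$ consists of the additivity relations forced by the group structure of $\pgam D$, and that the only thing that can kill these relations is the chain-level $\operatorname{Sq}^{0}$-type coproduct, which on a simplex is quadratic rather than additive. This is exactly the role of the paper's Proposition~\ref{pro:simplicespropertyS}, which proves the precise statement $\Delta(e_{m}\otimes\sigma)=\pm\,\sigma\otimes\sigma$ for an $m$-simplex $\sigma$ (note that for $m>1$ the relevant operation is the degree-$m$ generator $e_{m}$ of $RS_{2}$, not $\Delta_{1}=\Delta(e_{1}\otimes-)$, which raises degree by only one); the paper then iterates it through operad composites $F_{k}=e_{m}\circ_{1}\cdots\circ_{1}e_{m}$ to conclude that the arity-$k$ component of $\gamma_{D}$ sends $[\sigma]$ to $\pm(F_{k}\mapsto\sigma^{\otimes k})$ for every $k$. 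Your preliminary reduction to primitive elements is harmless (and valid, since everything in sight is torsion-free), but as you yourself observe it does not touch the real difficulty.

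The gap is that your proposal stops exactly where the mathematical content begins. Once the operations are computed, injectivity is equivalent to a purely algebraic assertion: for distinct nonzero elements $c_{1},\dots,c_{t}$ of the free abelian group $\moore{\pgam D}_{m}$, the vectors $e(c_{i})=(1,c_{i},c_{i}\otimes c_{i},c_{i}^{\otimes3},\dots)$ are linearly independent, so that $\sum a_{i}e(c_{i})=0$ forces all $a_{i}=0$. The paper proves this as Lemma~\ref{lem:diagonals-linearly-independent}, by mapping into a polynomial ring and evaluating a Vandermonde determinant. Your proposal replaces this step by ``a finite, if intricate, verification'' after a filtered-colimit reduction to finitely generated $D$ --- but that reduction, though legitimate, buys nothing: the statement left to verify on a finitely generated complex is the very same linear-independence assertion, and no finite collection of operations can establish it. Concretely, take $c_{i}=i\cdot c$ for $i=1,\dots,4$ (distinct nondegenerate $m$-simplices when $D=\ints$ concentrated in degree $m$): the kernel element $[c]-3[2c]+3[3c]-[4c]$ is primitive, is annihilated by $f$ (since $1-6+9-4=0$) \emph{and} by the quadratic arity-$2$ operation (since $1-12+27-16=0$), and is first detected in arity $3$; binomial-coefficient combinations on $c,2c,\dots,(t+1)c$ push the first nonvanishing ``moment'' out to arity $t$. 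So the leading-term analysis of $\Delta_{1}$ and finitely many analogues can never close the argument; what is needed is precisely the all-arities separation statement, i.e.\ the Vandermonde lemma, and without it (or an equivalent) your proof does not go through.
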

\begin{proof}
See appendix~\ref{sec:Proof-of-theorem}.
\end{proof}
There are well-known canonical chain-homotopy equivalences $\pi:\moore{\pgam D}\to D$
and $\iota:D\to\moore{\pgam D}$ with $\pi\circ\iota=1:D\to D$ (see
\cite[chapter~III, theorem~2.4]{Goerss-Jardine}). It follows that
the composite 
\[
\cf{\pgam D}\to P_{\S}\moore{\pgam D}\xrightarrow{P_{\S}\pi}P_{\S}D
\]
is a chain-homotopy-monomorphism (see \cite[proposition~4.10]{Smith:model-cats}
for a proof that $P_{\S}\pi$ is a chain-homotopy equivalence).

In some cases, we can say precisely what $\lcell D$ is
\begin{cor}
\label{cor:lcell-restricted-complex}If $D\in\chaincat$ satisfies
$D=\moore{\pgam C}$ for some $C\in\chaincat$, then 
\[
\lcell D=\cf{\pgam C}
\]
\end{cor}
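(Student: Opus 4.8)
The plan is to realize both $\lcell D$ and $\cf{\pgam C}$ as subcoalgebras of the single cofree coalgebra $P_{\S}D$ and then to prove these two subcoalgebras coincide. By definition~\ref{def:cellularcoalgebra}, $\lcell D=\mathrm{im}\,a_{D}$, where $a_{D}\colon\cf{\pgam D}\to P_{\S}D$ is the classifying map of the cogenerator $p_{D}\colon\forgetful{\cf{\pgam D}}\to D$. On the other hand, applying theorem~\ref{thm:injectivity-theorem} to $C$ itself (rather than to $D$) produces an \emph{injective} coalgebra map $\gamma_{C}\colon\cf{\pgam C}\to P_{\S}\moore{\pgam C}=P_{\S}D$, since by hypothesis $\moore{\pgam C}=D$. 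Thus $\cf{\pgam C}$ is canonically identified with $\mathrm{im}\,\gamma_{C}\subseteq P_{\S}D$, and the corollary reduces to the single assertion $\mathrm{im}\,a_{D}=\mathrm{im}\,\gamma_{C}$.

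For the inclusion $\mathrm{im}\,\gamma_{C}\subseteq\lcell D$ I would invoke the cellularity of $\cf{\pgam C}$ (being the coalgebra of a simplicial set) together with the universal property in the Rigidity Theorem~\ref{thm:(Rigidity-theorem)}. The cogenerator of $\gamma_{C}$ is the generator map $q_{C}\colon\forgetful{\cf{\pgam C}}\to D$; feeding $q_{C}$ into theorem~\ref{thm:(Rigidity-theorem)} yields a unique morphism $\cf{\pgam C}\to\lcell D$ whose composite with the inclusion $\lcell D\hookrightarrow P_{\S}D$ again has cogenerator $q_{C}$. By the uniqueness clause of definition~\ref{def:cofreecoalgebra} this composite must equal $\gamma_{C}$, so $\gamma_{C}$ factors through $\lcell D$ and $\mathrm{im}\,\gamma_{C}\subseteq\lcell D$.

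The reverse inclusion $\lcell D\subseteq\mathrm{im}\,\gamma_{C}$ is where the real work lies. Here I would exploit the Dold--Kan retraction coming from lemma~\ref{lem:gammacorrespondence}: the normalization maps give chain-homotopy equivalences $\pi\colon D=\moore{\pgam C}\to C$ and $\iota\colon C\to D$ with $\pi\circ\iota=1$, hence a retraction $\pgam C\xrightarrow{\Gamma\iota}\pgam D\xrightarrow{\Gamma\pi}\pgam C$ of simplicial abelian groups. The remark following theorem~\ref{thm:injectivity-theorem} identifies $a_{D}$ with the composite $P_{\S}(\pi^{D})\circ\gamma_{D}$, where $\pi^{D}\colon\moore{\pgam D}\to D$ is the normalization of $\pgam D$; and naturality of $\gamma$ over the chain map $\pi$ gives $\gamma_{C}\circ\cf{\Gamma\pi}=P_{\S}(\moore{\Gamma\pi})\circ\gamma_{D}$, whose image is all of $\mathrm{im}\,\gamma_{C}$ because $\cf{\Gamma\pi}$ is split epi. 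Thus both $\lcell D$ and $\mathrm{im}\,\gamma_{C}$ are images under $P_{\S}(-)\circ\gamma_{D}$ of two chain-homotopy equivalences $\moore{\pgam D}\to D$, namely $\pi^{D}$ and $\moore{\Gamma\pi}$, which agree only \emph{after} composition with $\pi$. I expect this to be the main obstacle: the two maps are not equal on the nose — they differ by degenerate chains — so equality of images does not follow formally. I would close the gap by showing that the extra degenerate directions of $\pgam D$ (those in $\ker\pi$) are annihilated in $P_{\S}D$ by the normalization built into $p_{D}$, which requires the explicit description of $P_{\S}$ from \cite{Smith:cofree} to control the image. Granting this, $\mathrm{im}\,a_{D}=\mathrm{im}\,\gamma_{C}\cong\cf{\pgam C}$, which is the desired identity $\lcell D=\cf{\pgam C}$.
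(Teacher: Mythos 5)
Your opening reduction --- identify $\lcell D=\mathrm{im}\,a_{D}$ and $\cf{\pgam C}\cong\mathrm{im}\,\gamma_{C}$ inside $P_{\S}D$ and prove the two images coincide --- is the paper's framing, but neither half of your argument goes through as written. For $\mathrm{im}\,\gamma_{C}\subseteq\lcell D$ you apply the Rigidity Theorem~\ref{thm:(Rigidity-theorem)} with source $\cf{\pgam C}$, which requires $\cf{\pgam C}\in\ccoalgcat$; you justify this by saying $\cf{\pgam C}$ is ``the coalgebra of a simplicial set.'' But the statement that $\cf X$ is cellular for every pointed, reduced simplicial set $X$ is corollary~\ref{cor:classifying-map-space-hurewicz}, which comes \emph{after} the present corollary and whose proof cites it; as the paper is organized you are not entitled to that fact here, so this step is circular unless you supply an independent proof of cellularity. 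The more serious problem is the reverse inclusion, where you candidly stop short: you reduce everything to comparing the two chain maps $\pi^{D},\moore{\pgam\pi}\colon\moore{\pgam D}\to D$, observe correctly that they are \emph{not} equal (they differ on degenerate simplices), and then defer to an unproved claim that the degenerate directions are ``annihilated in $P_{\S}D$.'' That unproved claim is precisely the content of the corollary in your formulation, so the proposal as written establishes neither inclusion.

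The idea you are missing is the paper's single appeal to the uniqueness clause of definition~\ref{def:cofreecoalgebra}, applied with the comparison map inserted \emph{before} $\gamma_{C}$ rather than after $\gamma_{D}$. Consider the coalgebra morphism $\cf{\pgam\pi}\colon\cf{\pgam D}\to\cf{\pgam C}$ induced by the simplicial map $\pgam\pi\colon\pgam D\to\pgam C$. Both $a_{D}$ and the composite $\gamma_{C}\circ\cf{\pgam\pi}$ are coalgebra morphisms into the cofree coalgebra $P_{\S}D$, and such morphisms are uniquely determined by the cogenerating chain map they cover; the paper concludes from this uniqueness that the triangle $a_{D}=\gamma_{C}\circ\cf{\pgam\pi}$ commutes. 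That one identity gives both inclusions at once: $\cf{\pgam\pi}$ is split surjective (since $\pi\circ\iota=1$ yields $\cf{\pgam\pi}\circ\cf{\pgam\iota}=1$), so $\lcell D=\mathrm{im}\,a_{D}=\mathrm{im}\,\gamma_{C}$, and theorem~\ref{thm:injectivity-theorem} says $\gamma_{C}$ is injective, so $\mathrm{im}\,\gamma_{C}\cong\cf{\pgam C}$. In other words, the degenerate-chain discrepancy that blocks your postcomposition argument ($P_{\S}(\pi^{D})\circ\gamma_{D}$ versus $P_{\S}(\moore{\pgam\pi})\circ\gamma_{D}$) is absorbed automatically once the comparison happens at the level of coalgebras of simplicial abelian groups and the cofree universal property is invoked; no analysis of the internal structure of $P_{\S}$ from \cite{Smith:cofree} is required.
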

\begin{rem*}
Proposition~2.20 in chapter~III of \cite{Goerss-Jardine} implies
that $\lcell D$ is (unnaturally) chain-homotopy equivalent to
\[
\cf{\prod_{j=1}^{\infty}K(H_{j}(D),j}
\]
where $H_{j}(D)$ is the $j^{\text{th}}$ homology group.\end{rem*}
\begin{proof}
The uniqueness of coalgebra morphisms to a cofree coalgebra implies
that the canonical map $a_{D}:\cf{\pgam D}\to P_{\S}D$ fits into
the commutative diagram 
\[
\xymatrix{{\cf{\pgam D}}\ar[rd]_{a_{D}}\ar[r]^{\cf{\pgam\pi}} & {\cf{\pgam C}}\ar[d]^{\gamma_{D}}\\
{} & {P_{\S}D}
}
\]
where $\pi:D\to C$ is the canonical chain-homotopy equivalence mentioned
above. The conclusion follows from the fact that $\gamma_{D}$ is
injective (by theorem~\ref{thm:injectivity-theorem}).
\end{proof}
This immediately implies
\begin{cor}
\label{cor:classifying-map-space-hurewicz}If $X$ is a pointed, reduced
simplicial set, then
\[
\lcell\forgetful{\cf X}=\cf{\pz X}
\]
It follows that $\cf X$ is cellular and that the classifying map
\[
\beta_{\cf X}:\cf X\to\lcell\forgetful{\cf X}=\cf{\pz X}
\]
is the chain-map induced by the Hurewicz map.

If $\varepsilon:\forgetful{\cf{\pz X}}=\forgetful{\lcell\forgetful{\cf X}}\to\forgetful{\cf X}$
is the cogeneration map, then
\[
\varepsilon\left(\sum_{i}a_{i}\left[\sum_{j}b_{i,j}\cdot\sigma_{i,j}\right]\right)=\sum_{i,j}a_{i}b_{i,j}\sigma_{i,j}
\]
where the $\sigma_{i,j}\in X$ are generators of $\forgetful{\cf X}$,
$[\sum_{j}b_{i,j}\cdot\sigma_{i,j}]\in\pz X$ are generators of $\forgetful{\cf{\pz X}}$,
and the $a_{i},b_{i,j}\in\ints$. It follows that$ $ $\lcell\varepsilon:\lcell\forgetful{\lcell\forgetful{\cf X}}=\forgetful{\cf{\pz^{2}X}}\to\forgetful{\cf{\pz X}}$
is given by
\begin{equation}
\lcell\varepsilon\left(\left[\sum_{i}a_{i}\cdot\left[\sum_{j}b_{i,j}\cdot\sigma_{i,j}\right]\right]\right)=\left[\sum_{i,j}a_{i}b_{i,j}\cdot\sigma_{i,j}\right]\label{eq:lcell-codgeneracy}
\end{equation}
\end{cor}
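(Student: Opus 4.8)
The plan is to deduce the equality $\lcell\forgetful{\cf X}=\cf{\pz X}$ as an immediate consequence of Corollary~\ref{cor:lcell-restricted-complex}, and then to pin down the classifying map of $\cf X$ using the \emph{uniqueness} clause in the cofree-coalgebra universal property (Definition~\ref{def:cofreecoalgebra}). The two displayed formulas for $\varepsilon$ and $\lcell\varepsilon$ are obtained afterward by unwinding the Dold-Kan adjunction and invoking naturality.

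First I would record the chain of natural identifications needed to apply Corollary~\ref{cor:lcell-restricted-complex}. By the remark following Definition~\ref{def:moore-complex} we have $\forgetful{\cf X}=C(X)=\moore{\pz X}$, while the proof of Lemma~\ref{lem:gammacorrespondence} gives $\pz X=\pgam NC(X)$, where $NC(X)$ is the normalized complex. Combining these yields $\forgetful{\cf X}=\moore{\pgam NC(X)}$, which is precisely the hypothesis of Corollary~\ref{cor:lcell-restricted-complex} with $C=NC(X)$. That corollary then gives $\lcell\forgetful{\cf X}=\cf{\pgam NC(X)}=\cf{\pz X}$, and the injectivity of $\gamma_{C(X)}$ (Theorem~\ref{thm:injectivity-theorem}) ensures this is a genuine realization of $\lcell\forgetful{\cf X}$ as a subcoalgebra of $P_{\S}\forgetful{\cf X}$. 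The only care required here is keeping the normalized, unnormalized, and Moore-complex descriptions of $C(X)$ mutually consistent so that each identification is the natural one.

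Next, to establish that $\cf X$ is cellular and that $\beta_{\cf X}$ is the Hurewicz-induced map, I would apply the functor $\cf *$ to the Hurewicz map $X\to\pz X$ (which on simplices is $\sigma\mapsto[\sigma]$), obtaining a coalgebra morphism $h:\cf X\to\cf{\pz X}$. Post-composing $h$ with the inclusion $\cf{\pz X}=\lcell\forgetful{\cf X}\hookrightarrow P_{\S}\forgetful{\cf X}$ produces a coalgebra morphism $\cf X\to P_{\S}\forgetful{\cf X}$, and by Definition~\ref{def:cofreecoalgebra} this is the classifying map of the chain map $\varepsilon\circ\forgetful h$, where $\varepsilon$ is the restriction of the cogeneration map of $P_{\S}\forgetful{\cf X}$ to $\lcell\forgetful{\cf X}$. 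Hence it suffices to check $\varepsilon\circ\forgetful h=1$ on $\forgetful{\cf X}$, which is immediate from the cogeneration formula once known: on the generator $[\sigma]$ it returns $\sigma$. Uniqueness in Definition~\ref{def:cofreecoalgebra} then forces $\beta_{\cf X}$ to equal $h$ followed by the inclusion, so its image lies in $\lcell\forgetful{\cf X}$, proving $\cf X$ cellular, with $\beta_{\cf X}$ the chain map induced by the Hurewicz map.

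The two explicit formulas are where the real bookkeeping lives, and I expect the formula for $\varepsilon$ to be the main obstacle. The cogeneration map is the map $p_{\forgetful{\cf X}}$ of Definition~\ref{def:pcmap}, so computing it means tracing the counit of the Dold-Kan adjunction in equation~\ref{eq:chain-gamma-adjunction}: a generator of $\forgetful{\cf{\pz X}}$ is a simplex of $\pz X$, i.e. a formal integral combination $\sum_j b_{i,j}\sigma_{i,j}$ of simplices of $X$, and the natural map from chains-on-the-underlying-simplicial-set to the Moore complex $\moore{\pz X}=C(X)$ realizes it as the actual chain $\sum_j b_{i,j}\sigma_{i,j}$, extended $\integers$-linearly across the outer sum; this is exactly the stated formula for $\varepsilon$. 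The final formula in equation~\ref{eq:lcell-codgeneracy} then follows by naturality: applying the first part of the corollary with $\pz X$ in place of $X$ identifies $\lcell\forgetful{\lcell\forgetful{\cf X}}$ with $\forgetful{\cf{\pz^{2}X}}$, the map $\lcell\varepsilon$ is induced by functoriality from $\varepsilon$, and evaluating on the generator $[\sum_i a_i\cdot[\sum_j b_{i,j}\cdot\sigma_{i,j}]]$ while substituting the formula for $\varepsilon$ inside the bracket produces the claimed value $[\sum_{i,j}a_i b_{i,j}\cdot\sigma_{i,j}]$.
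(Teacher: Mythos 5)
Your proposal is correct and takes essentially the same route as the paper's proof: the first identification $\lcell\forgetful{\cf X}=\cf{\pz X}$ via the Dold--Kan identifications $\pz X=\pgam NC(X)$, $\forgetful{\cf X}=\moore{\pz X}$ and Corollary~\ref{cor:lcell-restricted-complex}, then the identification of $\beta_{\cf X}$ with the Hurewicz-induced morphism by exhibiting it as a coalgebra map into $P_{\S}\forgetful{\cf X}$ and invoking the uniqueness clause of Definition~\ref{def:cofreecoalgebra}. Your explicit check that this composite covers the identity chain map, and your unwinding of the formulas for $\varepsilon$ and $\lcell\varepsilon$ from the Dold--Kan counit, simply fill in details the paper's proof leaves implicit.
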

\begin{rem*}
Among other things, this and definition~\ref{def:cofreecoalgebra}
(or theorem~\ref{thm:(Rigidity-theorem)}) imply that the geometrically-relevant
Hurewicz map
\[
\cf X\to\cf{\pz X}\subset P_{\S}\forgetful{\cf X}
\]
is \emph{uniquely determined} by the coalgebra structure of $\cf X$. 

It is natural to ask what additional information the coalgebra structure
provides. This corollary gives the answer: it determines the chain-level
effect of the \emph{Hurewicz map}. But these two data-points (i.e.,
the chain-complex and the chain-map induced by the Hurewicz map) suffice
to define $\integers^{\bullet}X$ --- the cosimplicial space used
to construct Bousfield and Kan's $\integers$-completion, $\integers_{\infty}X$
(see \cite{Bousfield-Kan}).\end{rem*}
\begin{proof}
Let $NC(X)$ be the normalized chain-complex of $X$ (i.e. degenerates
have been factored out). The Dold-Kan correspondence implies that
\begin{eqnarray*}
\pz X & = & \pgam NC(X)\\
\forgetful{\cf X} & = & \moore{\pz X}
\end{eqnarray*}
so the first statement follows from corollary~\ref{cor:lcell-restricted-complex}. 

The Hurewicz map $h:X\to\pz X$ induces an $\mathfrak{S}$-coalgebra
morphism $\cf X\to\cf{\pz X}$ making the composite
\begin{equation}
\cf X\xrightarrow{\cf h}\cf{\pz X}\xrightarrow{\cf{\iota_{x}}}\cf{\pgam{\forgetful{\cf X}}}\to\lcell\forgetful{\cf X}\label{eq:Hurewicz1}
\end{equation}
an $\mathfrak{S}$-coalgebra morphism. Here $\iota_{X}:\pz X\to\pgam{\forgetful{\cf X}}$
is defined in lemma~\ref{lem:gammacorrespondence}. Since the composite
of this with the inclusion with the inclusion $\cf{\pgam{\forgetful{\cf X}}}\hookrightarrow P_{\S}\forgetful{\cf X}$
is \emph{unique} (see definition~\ref{def:cofreecoalgebra} and equation~\ref{eq:cofree-adjunction}),
it follows that the composite in \ref{eq:Hurewicz1} is the classifying
map of $\cf X$, which must be cellular.
\end{proof}
We can also conclude that:
\begin{cor}
\label{cor:fccfgamma}If $C\in\chaincat$ then $\fc{\lcell C}=\pgam C$.\end{cor}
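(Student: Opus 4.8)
The plan is to compute the simplices of $\fc{\lcell C}$ directly from the definition of $\fc *$ and then recognize the result as $\pgam C$ by composing the adjunctions already in hand. By definition~\ref{def:fc} the $n$-simplices of $\fc{\lcell C}$ are the $\S$-coalgebra morphisms $\cf{\Delta^{n}}\to\lcell C$, i.e.
\[
(\fc{\lcell C})_{n}=\hom_{\ccoalgcat}(\cf{\Delta^{n}},\lcell C).
\]
This is legitimate as an identity of $\hom$-sets in $\ccoalgcat$ because $\cf{\Delta^{n}}$ is cellular by corollary~\ref{cor:classifying-map-space-hurewicz} and $\lcell C\in\ccoalgcat$ by construction. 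Applying the adjunction of the Rigidity Theorem~\ref{thm:(Rigidity-theorem)} then gives
\[
\hom_{\ccoalgcat}(\cf{\Delta^{n}},\lcell C)=\hom_{\chaincat}(\forgetful{\cf{\Delta^{n}}},C).
\]

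The next step is to recognize the right-hand side as $(\pgam C)_{n}$. Since $C\in\chaincat$ carries no distinguished augmentation, definition~\ref{def:pgam} gives $\pgam C=\Gamma C$, so the Dold--Kan adjunction~\ref{eq:chain-gamma-adjunction} yields
\[
(\pgam C)_{n}=\hom_{\spaces}(\Delta^{n},\Gamma C)=\hom_{\chaincat}(C(\Delta^{n}),C).
\]
To match this with the previous display I would use the identity $\forgetful{\cf X}=\moore{\pz X}=NC(X)$ from the proof of corollary~\ref{cor:classifying-map-space-hurewicz}, together with the fact (recorded in~\ref{eq:standard-simplex-complex}) that the chain-complex functor entering~\ref{eq:chain-gamma-adjunction} is the normalized one; this identifies $\forgetful{\cf{\Delta^{n}}}$ with $C(\Delta^{n})$. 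Chaining the two computations produces a bijection $(\fc{\lcell C})_{n}\cong(\pgam C)_{n}$ for every $n$.

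To upgrade these levelwise bijections to an equality of simplicial sets I would observe that each ingredient --- the functoriality of $\fc *$ underlying lemma~\ref{lem:fccfadjunction}, the adjunction of theorem~\ref{thm:(Rigidity-theorem)}, and~\ref{eq:chain-gamma-adjunction} --- is natural in the simplicial variable, so the bijections commute with the operators induced by the morphisms $[m]\to[n]$ of the simplex category, hence with all faces and degeneracies. This is precisely the compatibility needed to conclude $\fc{\lcell C}=\pgam C$.

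I expect the only real obstacle to be bookkeeping rather than conceptual content. One must verify that the two natural isomorphisms are mutually compatible, and in particular that the ``integral chain-complex'' appearing in~\ref{eq:chain-gamma-adjunction} is identified with $\forgetful{\cf X}$ through the same normalized, reduced complex $NC(X)$ used elsewhere in the paper, and that the pointed, reduced conventions defining $\spaces$ are consistent with the passage from $\Gamma$ to $\pgam$ (including the handling of basepoints for the non-reduced $\Delta^{n}$). Once these identifications are pinned down, the equality of simplicial sets follows formally.
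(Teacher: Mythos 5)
Your proposal is correct and follows essentially the same route as the paper's own proof: identify the $n$-simplices of $\fc{\lcell C}$ as $\hom$-sets via definition~\ref{def:fc}, convert them to $\hom_{\chaincat}(C(\Delta^{n}),C)$ by the Rigidity Theorem adjunction, and recognize the result as the Dold--Kan construction $\pgam C$ using equation~\ref{eq:standard-simplex-complex}. The extra bookkeeping you flag (cellularity of $\cf{\Delta^{n}}$, the identification $\forgetful{\cf{\Delta^{n}}}=C(\Delta^{n})$ through the normalized complex, and naturality in the simplicial variable) is exactly what the paper leaves implicit, so your write-up is a more detailed rendering of the same argument.
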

\begin{proof}
The rigidity theorem (\ref{thm:(Rigidity-theorem)}) implies that
\[
\fc{\lcell C}=\{\hom_{\coalgcat}(\cf{\Delta^{i}},\lcell C)\}=\{\hom_{\chaincat}(C(\Delta^{i}),C)\}
\]
where $i$ runs over simplices of all dimensions. But equation~\ref{eq:standard-simplex-complex}
implies that this is just the definition of $\pgam C$ (see \cite[chapter III]{Goerss-Jardine}).\end{proof}
\begin{defn}
\label{def:cosimplicial}Let:
\begin{enumerate}
\item $\coS$ denote the category of cosimplicial, pointed reduced simplicial
sets, 
\item $\cocell$ denote the category of cosimplicial cellular coalgebras,
\item $\cochaincat$ denote the category of cosimplicial chain-complexes
\item $\cofc *:\cocell\to\coS$ be the result of applying $\fc *$ in each
codimension
\end{enumerate}
\end{defn}
\begin{prop}
The list $(\lcell\forgetful *,\beta_{*},v_{*})$ constitutes a triple
(or monad --- see \cite[section~8.6]{Weibel:homological-algebra})
on $\ccoalgcat$ where, for $C\in\ccoalgcat$:
\begin{enumerate}
\item $\beta_{C}:C\to\lcell\forgetful C$ is the classifying map,
\item $v_{C}=\lcell\epsilon:\lcell(\forgetful{\lcell C})\to\lcell C$, where
$\varepsilon:\lcell C\to C$ is the restriction of the cogeneration
map $\forgetful{P_{\S}C}\to C$ (see definition~\ref{def:cofreecoalgebra}).
\end{enumerate}

Consequently, it defines a functor
\[
Q:\ccoalgcat\to\cocell
\]
(see \cite[section~8.6]{Weibel:homological-algebra}). 

Composing $Q(*)$ with the functor $\fc *$, applied in each codimension,
gives rise to a functor
\[
\qhat:\ccoalgcat\to\coS
\]
to the category of pointed reduced cosimplicial sets.\end{prop}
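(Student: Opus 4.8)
The plan is to recognize the triple $(\lcell\forgetful *,\beta_{*},v_{*})$ as the monad canonically attached to the adjunction furnished by the Rigidity Theorem, so that every triple identity becomes an instance of a triangle identity and no ad hoc computation is required; the functors $Q$ and $\qhat$ then arise from the standard cosimplicial resolution associated to a monad.

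First I would invoke theorem~\ref{thm:(Rigidity-theorem)}, which exhibits the forgetful functor $\forgetful *:\ccoalgcat\to\chaincat$ as left adjoint to $\lcell *:\chaincat\to\ccoalgcat$, so that $\forgetful *\dashv\lcell *$. By the general fact that any adjunction $F\dashv G$ determines a monad $GF$ on the source of $F$ (see \cite[section~8.6]{Weibel:homological-algebra}), the composite $\lcell\forgetful *$, henceforth written $T$, is a monad on $\ccoalgcat$, with unit the adjunction unit and multiplication $G\epsilon F$, where $\epsilon$ denotes the counit. It then remains only to match the two named transformations with this unit and multiplication.

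For the unit, taking $D=\forgetful C$ and $f=1_{\forgetful C}$ in theorem~\ref{thm:(Rigidity-theorem)} yields the unique coalgebra morphism $C\to\lcell\forgetful C$ lying over the identity; this is simultaneously the adjunction unit $\eta_{C}$ and, by definition~\ref{def:cellularcoalgebra}, the classifying map $\beta_{C}$. For the multiplication, the counit $\epsilon_{D}:\forgetful{\lcell D}\to D$ is exactly the restricted cogeneration map of definition~\ref{def:cofreecoalgebra}, so the component of $G\epsilon F$ at $C$ is $\lcell(\epsilon_{\forgetful C}):T^{2}C\to TC$, which is precisely $v_{C}=\lcell\epsilon$. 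Naturality of $\beta_{*}$ and of $\epsilon$ is forced by the uniqueness clause in the cofree universal property, so the associativity and unit axioms of the triple hold automatically. I expect this bookkeeping identification of the explicitly defined maps with the adjunction data to be the only delicate point; once it is settled, everything is formal.

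Finally, a monad on a category assigns to each object a cosimplicial object of that category: set $(QC)^{n}=T^{n+1}C$, with coface operators obtained by inserting the unit $\beta_{*}$ and codegeneracy operators obtained by applying the multiplication $v_{*}$ (see \cite[section~8.6]{Weibel:homological-algebra}). The cosimplicial identities follow formally from the triple identities just verified, giving the functor $Q:\ccoalgcat\to\cocell$. Post-composing with $\fc *$ applied in each codimension---functorial by lemma~\ref{lem:fccfadjunction}---then produces $\qhat:\ccoalgcat\to\coS$.
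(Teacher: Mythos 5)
Your proof is correct, and it rests on the same key input as the paper's proof --- theorem~\ref{thm:(Rigidity-theorem)} --- but it reaches the triple identities by a different, more packaged route. The paper never invokes the general adjunction-to-monad correspondence: it writes out the associativity square and the unit triangles for $(\lcell\forgetful *,\beta_{*},v_{*})$ and argues in each case that the competing composites are coalgebra morphisms lifting one and the same cogeneration map $\varepsilon$, hence coincide by the uniqueness clause of the Rigidity Theorem. You instead take the adjunction $\forgetful *\dashv\lcell *$, which the Rigidity Theorem asserts explicitly as part of its conclusion, and quote the standard fact that $GF$ carries a monad structure with unit $\eta$ and multiplication $G\epsilon F$; your identifications $\eta_{C}=\beta_{C}$ (both being the unique coalgebra morphism over $1_{\forgetful C}$) and $G\epsilon F=v_{*}=\lcell\varepsilon$ are exactly right, and they are forced by that same uniqueness clause. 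The two arguments are mathematically equivalent --- the general theorem you cite is proved by precisely the diagram chases the paper performs --- so your version buys brevity and a conceptual source for the identities, while the paper's version is self-contained and makes the role of uniqueness visible. One point that both you and the paper leave implicit: for $Q$ to take values in $\cocell$, the category of cosimplicial \emph{cellular} coalgebras, one needs each level $T^{n+1}C$ to be cellular again, i.e.\ that $\lcell D$ is cellular for every $D\in\chaincat$ in the image of the iteration; this requires a short argument (naturality of $a_{*}$ together with surjectivity of $\cf{\pgam D}\to\lcell D$) that the paper buries in the phrase ``the construction of $QC$ is also standard,'' and that your appeal to the standard triple resolution likewise glosses over.
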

\begin{rem*}
The cogeneration-map
\[
\varepsilon:\forgetful{P_{\S}C}\to C
\]
is defined by the composite
\[
P_{\S}C\hookrightarrow\prod_{j=0}^{\infty}\homzs j(\rs j,C^{\otimes j})\to\homzs 1(\rs 1,C)=C
\]
see \cite{Smith:cofree}. Here, we follow the convention that $\homzs 0(\rs 1,C^{\otimes0})=\ints$
and $\homzs 1(\rs 1,C)=C$.\end{rem*}
\begin{proof}
The diagram\begingroup \begin{small}
\[
\xyC{25pt}\xymatrix{{\lcell\forgetful{(\lcell\forgetful{(\lcell\forgetful C)}}}\ar[d]_{v_{\lcell C}}\ar@{=}[r] & {\lcell\forgetful{(\lcell\forgetful{\lcell\forgetful C})}}\ar[r]^{\qquad\lcell\forgetful{v_{c}}} & {\lcell\forgetful{\lcell\forgetful C}}\ar[d]^{v_{C}}\\
{\lcell\forgetful{\lcell\forgetful C}}\ar[rr]_{v_{C}} & {} & {\lcell\forgetful C}
}
\]
\end{small}\endgroup commutes by virtue of the Rigidity Theorem (\ref{thm:(Rigidity-theorem)}):
in all composed maps, $C$, remains fixed so the composites are the
unique coalgebra morphisms that lift the cogeneration map $\varepsilon:\lcell C\to C$.
A similar argument shows that the diagram
\[
\xyC{30pt}\xymatrix{{\lcell C}\ar[r]^{\lcell\beta_{C}\quad}\ar@{=}[rd] & {\lcell\forgetful{\lcell C}}\ar[d]_{v_{C}} & {\lcell C}\ar[l]_{\qquad\beta_{\lcell\forgetful C}}\ar@{=}[ld]\\
{} & {\lcell C} & {}
}
\]
commutes, verifying the identities that $(\lcell*,\beta_{*},v_{*})$
must satisfy. The construction of $QC$ is also standard --- see \cite[section~8.6]{Weibel:homological-algebra}
or \cite[chapter VII, section 4]{Goerss-Jardine}. The final statement
is straightforward.
\end{proof}
In the case where our cellular coalgebra is of the form $\cf X$,
we can say more about $Q_{*}$:
\begin{lem}
\label{lem:q-of-space}If $X$ is a pointed, reduced simplicial set,
the cosimplicial cellular coalgebra $Q\cf X$ is has levels $Q^{n}\cf X=\cf{\pz^{n+1}X}$
with 
\begin{enumerate}
\item coface maps
\begin{multline*}
\delta^{i}=\cf{\pz^{i}h\pz^{n-i+1}}:\\
Q^{n}\cf X=\cf{\pz^{n+1}X}\to Q^{n+1}\cf X=\cf{\pz^{n+2}X}
\end{multline*}
for $i=0,\dots n+1$, where $h:X\to\pz X$ is the Hurewicz map, and
\item codegeneracy maps
\begin{multline*}
s^{i}=\cf{\pz^{i}\gamma\pz^{n-i}}:\\
Q^{n+1}\cf X=\cf{\pz^{n+2}X}\to Q^{n}\cf X=\cf{\pz^{n+1}X}
\end{multline*}
for $i=0,\dots,n$, where $\gamma=\lcell\varepsilon$ in equation~\ref{eq:lcell-codgeneracy}.
\end{enumerate}

It follows that 
\[
Q^{\bullet}\cf X=\cf{\ints^{\bullet}X}
\]
where $\ints^{\bullet}X$ is the cosimplicial $\ints$-resolution
of $X$ defined in example~4.1 of \cite[chapter~VII, section~4]{Goerss-Jardine}.

\end{lem}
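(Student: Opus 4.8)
The plan is to recognize $Q\cf X$ as the standard cosimplicial object (the cotriple resolution) attached to the monad $(\lcell\forgetful *,\beta_{*},v_{*})$ and then to identify each of its ingredients on the nose using corollary~\ref{cor:classifying-map-space-hurewicz}. Writing $T$ for the endofunctor $\lcell\forgetful *$ on $\ccoalgcat$, the general passage from a monad to a cosimplicial object (see \cite[section~8.6]{Weibel:homological-algebra}) gives $Q^{n}\cf X=T^{n+1}\cf X$ with cofaces $\delta^{i}=T^{i}\beta_{T^{n+1-i}\cf X}$ for $0\le i\le n+1$ and codegeneracies $s^{i}=T^{i}v_{T^{n-i}\cf X}$ for $0\le i\le n$. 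Thus everything reduces to computing $T^{n+1}\cf X$ together with the effect of $\beta$ and $v$ at each level.

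First I would compute the levels by induction on $n$. The base case $T\cf X=\lcell\forgetful{\cf X}=\cf{\pz X}$ is precisely corollary~\ref{cor:classifying-map-space-hurewicz}. For the inductive step I observe that $\pz^{k}X$ is again a pointed, reduced simplicial set --- it is the underlying simplicial set of a reduced simplicial abelian group --- so corollary~\ref{cor:classifying-map-space-hurewicz} applies to it verbatim and yields $\lcell\forgetful{\cf{\pz^{k}X}}=\cf{\pz^{k+1}X}$; hence $T^{n+1}\cf X=\cf{\pz^{n+1}X}$, as claimed.

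Next I would identify the structure maps, and the key technical point is a naturality upgrade: corollary~\ref{cor:classifying-map-space-hurewicz} must be promoted from an isomorphism of objects to an isomorphism of functors, namely $\lcell\forgetful{\cf f}=\cf{\pz f}$ for every simplicial map $f$. This follows from the naturality of corollary~\ref{cor:lcell-restricted-complex} --- using $\pz Y=\pgam NC(Y)$ and $\forgetful{\cf Y}=\moore{\pz Y}$ --- together with the naturality of the Dold-Kan correspondence and of the cofree-coalgebra construction. Granting this, corollary~\ref{cor:classifying-map-space-hurewicz} identifies the monad unit $\beta_{\cf Y}=\cf h$ with the Hurewicz map $h\colon Y\to\pz Y$, and identifies the monad multiplication $v_{\cf Y}=\cf\gamma$ with the summation map $\gamma\colon\pz\pz Y\to\pz Y$ of equation~\ref{eq:lcell-codgeneracy}. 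Substituting $Y=\pz^{n+1-i}X$ into $\delta^{i}$ and pushing $T^{i}$ through by naturality yields $\delta^{i}=\cf{\pz^{i}h\pz^{n-i+1}}$; substituting $Y=\pz^{n-i}X$ into $s^{i}$ yields $s^{i}=\cf{\pz^{i}\gamma\pz^{n-i}}$.

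Finally I would note that $(\pz,h,\gamma)$ is exactly the free pointed abelian group monad on pointed reduced simplicial sets, so that the cosimplicial object with levels $\pz^{n+1}X$, cofaces $\pz^{i}h\pz^{n-i+1}$, and codegeneracies $\pz^{i}\gamma\pz^{n-i}$ is by definition the Bousfield-Kan resolution $\ints^{\bullet}X$ (\cite[chapter~VII, section~4, example~4.1]{Goerss-Jardine}). Since $\cf *$ is applied levelwise, the identifications above give $Q^{\bullet}\cf X=\cf{\ints^{\bullet}X}$. I expect the main obstacle to be exactly the naturality upgrade in the third step: corollary~\ref{cor:classifying-map-space-hurewicz} is stated for a single fixed $X$, and to transport it through the functors $T^{i}$ and to match the coface and codegeneracy maps one must check that the identifications $\lcell\forgetful{\cf *}=\cf{\pz *}$, $\beta=\cf h$, and $v=\cf\gamma$ are all natural in the simplicial set. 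Everything else is the formal bookkeeping of the cotriple resolution.
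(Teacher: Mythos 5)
Your proposal is correct and follows essentially the same route as the paper's own (very terse) proof: induction on the level using corollary~\ref{cor:classifying-map-space-hurewicz}, identification of the cofaces and codegeneracies of the monad resolution with $\cf{h}$ and $\cf{\gamma}$ via that same corollary and equation~\ref{eq:lcell-codgeneracy}, and the final identification with $\ints^{\bullet}X$ via example~4.1 of \cite[chapter~VII, section~4]{Goerss-Jardine}. The naturality points you flag (promoting $\lcell\forgetful{\cf *}=\cf{\pz *}$ to an identification of functors) are exactly the details the paper leaves implicit, so your write-up is a legitimate filling-in of the paper's sketch rather than a different argument.
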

\begin{rem*}
The cosimplicial space, $\ints^{\bullet}X$, is a variation on the
Bousfield-Kan $\ints$-resolution of $X$.\end{rem*}
\begin{proof}
That $Q^{n}\cf X=\cf{\pz^{n+1}X}$ follows by induction and corollary~\ref{cor:classifying-map-space-hurewicz},
which also implies the statements regarding coface and codegeneracies.
The final statement follows from example~4.1 of \cite[chapter~VII, section~4]{Goerss-Jardine}.\end{proof}
\begin{defn}
\label{def:Hurewiczfunctor}Define the functor, $\realization:\ccoalgcat\to\spaces$,
the category of pointed, reduced simplicial sets by
\[
\realization C=\totalspace{\qhat C}=\hom_{\coS}(\Delta^{\bullet},\qhat C)
\]
 ---where $\totalspace *$ is the total-space functor (see \cite[chapter~VIII]{Goerss-Jardine}
or \cite{Bousfield-Kan}) and $\Delta^{\bullet}$ is the standard
cosimplex. We call $\realization C$ the \emph{Hurewicz realization}
of $C$.\end{defn}
\begin{prop}
\label{prop:hurewicz-realization-fibrant}If $C\in\ccoalgcat$, then
$\realization C$ is a Kan space, and any surjection $f:C\to D$ in
$\ccoalgcat$ induces a fibration
\[
\realization f:\realization C\to\realization D
\]

If is also a homology equivalence, then $\realization f$ is a trivial
fibration. \end{prop}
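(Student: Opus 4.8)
The plan is to realize $\realization C=\totalspace{\qhat C}$ as the total space of a \emph{Reedy fibrant} cosimplicial object in pointed, reduced simplicial sets and to use that $\totalspace *=\hom_{\coS}(\Delta^{\bullet},-)$ is a right Quillen functor for the Reedy model structure (see \cite[chapter~VIII]{Goerss-Jardine} and \cite{Bousfield-Kan}). A right Quillen functor sends Reedy fibrant objects to Kan spaces, Reedy fibrations to fibrations, and Reedy trivial fibrations to trivial fibrations, so the three assertions reduce to: (a) $\qhat C$ is Reedy fibrant; (b) a surjection $f\colon C\to D$ induces a Reedy fibration $\qhat f$; and (c) if $f$ is also a homology equivalence then $\qhat f$ is a levelwise weak equivalence, so that with (b) it is a Reedy trivial fibration.

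Write $S=\forgetful{\lcell *}$ for the comonad on $\chaincat$ underlying the rigidity adjunction $\forgetful *\dashv\lcell *$ of theorem~\ref{thm:(Rigidity-theorem)}. Since $Q^{n}C=\lcell(S^{n}\forgetful C)$, corollary~\ref{cor:fccfgamma} identifies each level $\qhat^{n}C=\fc{\lcell(S^{n}\forgetful C)}=\pgam(S^{n}\forgetful C)$, a simplicial abelian group and hence a Kan complex; moreover $\pgam(g)$ is a weak equivalence precisely when $g$ is a homology isomorphism, which is the mechanism for (c). The key structural point is that the \emph{codegeneracies} of $\qhat C$ are homomorphisms of simplicial abelian groups: the triple multiplication $v=\lcell\varepsilon$ factors through $\lcell *$, so each codegeneracy of $QC$ is $\lcell *$ applied to a chain map, whence by naturality of corollary~\ref{cor:fccfgamma} its image under $\fc *$ is $\pgam$ of a chain map. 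Consequently the matching object $M^{n}\qhat C$ is a simplicial abelian group and the matching map $\qhat^{n}C\to M^{n}\qhat C$ is a homomorphism, computed through $\pgam$ from the corresponding map of chain complexes.

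To establish (a) I would show this matching map is surjective, whence a Kan fibration, since surjections of simplicial abelian groups are fibrations. The point is that the coaugmented cosimplicial coalgebra $QC$ produced by the triple is \emph{split}: the counit $p\colon\forgetful{\lcell *}\to\mathrm{id}$ of the rigidity adjunction supplies an extra codegeneracy, and hence a contraction of the underlying chain-level resolution $S^{\bullet}\forgetful C$. The standard inductive argument then produces a natural chain-level section of the matching map; applying the exact functor $\pgam$ gives a section in $\sab$, so the matching map is a split surjection and $\qhat C$ is Reedy fibrant. For (b), a surjection $f$ yields a surjective $\forgetful f$, and since $\lcell *$ (hence $S$) preserves surjections, each $\qhat^{n}f=\pgam(S^{n}\forgetful f)$ is a degreewise surjective homomorphism. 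The contraction of (a) is natural in $C$, so it is compatible with $f$; combining the levelwise surjectivity of $\qhat f$ with these natural splittings shows the relative matching maps $\qhat^{n}C\to\qhat^{n}D\times_{M^{n}\qhat D}M^{n}\qhat C$ are surjective, hence fibrations, so $\qhat f$ is a Reedy fibration.

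Finally, for (c) it remains to see that $\qhat^{n}f=\pgam(S^{n}\forgetful f)$ is a weak equivalence when $\forgetful f$ is a homology equivalence; by the previous paragraph this reduces to showing that $S=\forgetful{\lcell *}$ preserves homology equivalences. I would deduce this from corollary~\ref{cor:lcell-restricted-complex} together with the discussion following theorem~\ref{thm:injectivity-theorem}, where $\lcell D$ is identified, up to natural chain-homotopy equivalence, with $\cf{\pgam D}$; since both $\pgam$ and $\cf *$ preserve the relevant equivalences, so does $S$. Then $\qhat f$ is a Reedy fibration and a levelwise equivalence, i.e.\ a Reedy trivial fibration, and $\realization f$ is a trivial fibration. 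The main obstacle is (a): the coface operators of $\qhat C$ are \emph{not} homomorphisms of simplicial abelian groups---being $\fc *$ of the unit $\beta$ evaluated at cofree coalgebras, rather than $\lcell *$ of chain maps---so the naive Dold--Kan splitting of a matching map is unavailable, and one must instead extract the splitting from the comonad contraction.
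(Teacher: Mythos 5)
Your high-level reduction is sound and is in fact the paper's own: the paper observes that every level of $\qhat C$ is a simplicial abelian group and that all structure maps \emph{except the $0^{\text{th}}$ coface} are homomorphisms, so that $\qhat C$ is ``group-like'' in the sense of section~4 of chapter~X of \cite{Bousfield-Kan}, and then quotes their proposition~4.9 (whose content is exactly your step (a), for maps as well as objects) together with theorem~4.13 of chapter~VIII for the trivial-fibration statement. The difficulty is that your reconstruction of (a) rests on a false premise, namely that \emph{all} cofaces of $\qhat C$ fail to be homomorphisms. Writing $T=\lcell\forgetful *$ for the triple, the coface $\delta^{i}:T^{n+1}C\to T^{n+2}C$ is $T^{i}\beta T^{n+1-i}$, and for $i\ge1$ the outermost copy of $T$ exhibits it as $\lcell$ applied to a chain map; hence, by the same naturality of corollary~\ref{cor:fccfgamma} that you use for the codegeneracies, its image under $\fc *$ is $\pgam$ of a chain map, i.e.\ a homomorphism of simplicial abelian groups. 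Only $\delta^{0}=\beta_{T^{n+1}C}$ is merely a simplicial map (compare lemma~\ref{lem:q-of-space}: $\delta^{i}=\cf{\pz^{i}h\pz^{n-i+1}}$ is $\cf{\cdot}$ of a homomorphism precisely when $i\ge1$). This single exception is exactly what the Bousfield--Kan notion of ``group-like'' is designed to accommodate.

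Because you discarded the cofaces $\delta^{i}$, $i\ge1$, your substitute mechanism for (a) has a genuine gap. First, the contraction is not where you claim it is: on $S^{\bullet}\forgetful C$ the maps obtained by applying the counit $p$ at the available spots of $S^{n+1}\forgetful C$ are \emph{precisely} the existing codegeneracies $S^{j}pS^{n-j}$, $0\le j\le n$, so there is no extra one; the standard splitting of a triple resolution lives one stage higher, on $\forgetful{Q^{\bullet}C}=S^{\bullet+1}\forgetful C$, and does not descend to $S^{\bullet}\forgetful C$. Second, and more fundamentally, an extra codegeneracy \emph{lowers} cosimplicial degree, whereas a section of the matching map $\qhat^{n}C\to M^{n-1}\qhat C$ requires maps that \emph{raise} it; in every version of the standard inductive argument (Bousfield--Kan's proof of proposition~4.9, or the cosimplicial Dold--Kan splitting) those raising maps are supplied by the cofaces $d^{i}$, $i\ge1$, together with the codegeneracies. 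So step (a) does not close as written, and step (b), which reuses the same splittings for the relative matching maps, inherits the problem; step (c) is fine modulo (a) and (b). The repair is to use the fact you denied: since all codegeneracies and all cofaces except $\delta^{0}$ are $\pgam$ of chain maps, the matching maps are surjective homomorphisms of simplicial abelian groups (build preimages inductively with $d^{i}$, $i\ge1$), hence fibrations --- which is exactly the group-like argument the paper invokes.
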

\begin{proof}
All of the coface maps except for the $0^{\text{th}}$ in $\qhat C$
are morphisms of simplicial abelian groups. It follows that $\qhat C$
is ``group-like'' in the sense of section~4 in chapter~X of \cite{Bousfield-Kan}.
The conclusion follows from proposition~4.9 section~4 in chapter~X
of \cite{Bousfield-Kan}. 

If $f$ is also a homology equivalence, then $\qhat f:\qhat C\to\qhat D$
is a pointwise trivial fibration. The final statement follows from
theorem~4.13 in chapter~VIII of \cite{Bousfield-Kan}, and the fact
that $\Delta^{\bullet}$ is cofibrant in $\coS$.
\end{proof}
When $C$ is topologically realizable, we can say a bit more:
\begin{prop}
\label{prop:canonical-x-hx}If $X$ is a pointed, reduced simplicial
set there exists a natural canonical map
\[
h_{X}:X\to\realization\cf X
\]
natural with respect to maps of simplicial sets. If $Y$ is another
pointed, reduced simplicial set and 
\[
f:\cf X\to\cf Y
\]
 is a morphism of $\S$-coalgebras, the diagram
\begin{equation}
\xymatrix{{\cf X}\ar[r]^{f}\ar[d]_{\cf{h_{X}}} & {\cf Y}\ar[d]^{\cf{h_{Y}}}\\
{\cf{\realization\cf X}}\ar[r]_{\cf{\realization f}} & {\cf{\realization\cf Y}}
}
\label{eq:canonical-x-hx1}
\end{equation}
commutes.\end{prop}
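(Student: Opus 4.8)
The plan is to realize $h_{X}$ as a two-stage composite and then deduce the commutativity of diagram~\ref{eq:canonical-x-hx1} by pasting two naturality squares: the first is precisely Proposition~\ref{prop:cf(x)-maps-to-fccf}, and the second follows from the functoriality of the resolution $Q$. To construct the map, recall from the construction of $Q$ that the cosimplicial coalgebra $Q\cf X$ is coaugmented by the classifying map $\beta_{\cf X}:\cf X\to Q^{0}\cf X=\lcell\forgetful{\cf X}$. Applying $\fc *$ in each codimension makes the cosimplicial space $\qhat\cf X=\fc{Q\cf X}$ coaugmented by $\fc{\cf X}$, and the functor $\totalspace{*}$ then supplies a natural map $e_{X}:\fc{\cf X}\to\totalspace{\qhat\cf X}=\realization\cf X$. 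I define
\[
h_{X}:X\xrightarrow{\,u_{X}\,}\fc{\cf X}\xrightarrow{\,e_{X}\,}\realization\cf X,
\]
where $u_{X}$ is the unit of the adjunction in Lemma~\ref{lem:fccfadjunction}. Naturality of $h_{X}$ with respect to a simplicial map $X\to X'$ is then immediate: $u$ is natural as an adjunction unit, and $e$ is natural because $Q$, $\fc *$ and $\totalspace{*}$ are functors and $\beta_{*}$ is a natural transformation of the monad $(\lcell\forgetful *,\beta_{*},v_{*})$.

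To prove that diagram~\ref{eq:canonical-x-hx1} commutes, apply the functor $\cf *$ to the composite above, so that $\cf{h_{X}}=\cf{e_{X}}\circ\cf{u_{X}}$ and likewise for $Y$. It then suffices to show that the two squares in
\[
\xymatrix{{\cf X}\ar[r]^{f}\ar[d]_{\cf{u_{X}}} & {\cf Y}\ar[d]^{\cf{u_{Y}}}\\{\cf{\fc{\cf X}}}\ar[r]^{\cf{\fc f}}\ar[d]_{\cf{e_{X}}} & {\cf{\fc{\cf Y}}}\ar[d]^{\cf{e_{Y}}}\\{\cf{\realization\cf X}}\ar[r]_{\cf{\realization f}} & {\cf{\realization\cf Y}}}
\]
commute and then paste them. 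The upper square is exactly the content of Proposition~\ref{prop:cf(x)-maps-to-fccf}, namely $\cf{u_{Y}}\circ f=\cf{\fc f}\circ\cf{u_{X}}$ for every morphism $f$ of $\S$-coalgebras. The lower square expresses the naturality of the coaugmentation $e$: since $\realization f=\totalspace{\qhat f}$ is induced by $Qf:Q\cf X\to Q\cf Y$ and $\beta_{*}$ is natural, the cosimplicial map $Qf$ carries $\beta_{\cf X}$ to $\beta_{\cf Y}$; applying $\fc *$ and $\totalspace{*}$ gives $e_{Y}\circ\fc f=\realization f\circ e_{X}$ as simplicial maps, and one further application of $\cf *$ yields the lower coalgebra square. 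Pasting gives $\cf{h_{Y}}\circ f=\cf{\realization f}\circ\cf{h_{X}}$, as required.

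The delicate point --- and the reason the construction is routed through $Q$ rather than through any geometric realization of $f$ --- is that $f$ is only assumed to be a morphism of $\S$-coalgebras and need not be of the form $\cf g$ for a simplicial map $g$. Every naturality statement above must therefore hold for \emph{arbitrary} coalgebra morphisms; one cannot argue by realizing $f$ topologically. This is precisely the force of Proposition~\ref{prop:cf(x)-maps-to-fccf}, which establishes the upper square with no realizability hypothesis, and of the functoriality of $Q$ on all of $\ccoalgcat$. Once these two facts are in hand the remainder is a formal diagram-paste; the only verification demanding care is that the coaugmentation $e$ is natural in the coalgebra variable, which unwinds to the naturality of $\beta_{*}$ together with the functoriality of $\totalspace{*}$.
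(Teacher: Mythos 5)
Your proof is correct and follows essentially the same route as the paper: $h_{X}$ is built by composing the adjunction unit $u_{X}$ with the map to $\totalspace{\qhat\cf X}$ induced by the coaugmentation of the $Q$-resolution, and the square~\ref{eq:canonical-x-hx1} is deduced from Proposition~\ref{prop:cf(x)-maps-to-fccf} together with formal naturality and functoriality. The only (harmless) difference is that you describe the coaugmentation intrinsically via the monad unit $\beta_{\cf X}$, whereas the paper uses the Bousfield--Kan coaugmentation $\phi_{X}:X\to\ints^{\bullet}X$ together with the identification $Q^{\bullet}\cf X=\cf{\ints^{\bullet}X}$ of Lemma~\ref{lem:q-of-space}; the two coaugmentations coincide by Corollary~\ref{cor:classifying-map-space-hurewicz}, so the resulting maps $h_{X}$ are the same.
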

\begin{proof}
The $\ints$-completion functor comes equipped with a natural augmentation
map
\[
\phi_{X}:X\to\ints^{\bullet}X
\]
Since lemma~\ref{lem:q-of-space} implies that $Q^{\bullet}X=\cf{\ints^{\bullet}X}$,
the induces a canonical map
\[
\cf{\phi_{X}}:\cf X\to\cf{\ints^{\bullet}X}=\qhat\cf X
\]
and, applying the $\fc *$ functor gives
\[
\fc{\cf{\phi_{X}}}:\fc{\cf X}\to\fc{\cf{\ints^{\bullet}X}}
\]
and we define $h_{X}$ to be \emph{induced} by the composite (i.e.
the morphism of total spaces induced by this)
\[
X\xrightarrow{u_{X}}\fc{\cf X}\xrightarrow{\fc{\cf{\phi_{X}}}}\fc{\cf{\ints^{\bullet}X}}
\]
Commutativity of diagram~\ref{eq:canonical-x-hx1} follows from proposition~\ref{prop:cf(x)-maps-to-fccf},
which shows that the diagram 
\[
\xymatrix{{\cf X}\ar[d]\ar[r]^{f} & {\cf Y}\ar[d]\\
{\cf{\ints X}}\ar[r]_{\lcell f}\ar[d] & {\cf{\ints Y}}\ar[d]\\
{\vdots} & {\vdots}
}
\]
of cosimplicial cellular coalgebras commutes at the lowest level.
\end{proof}
If $X$ is a pointed, reduced simplicial set, lemma~\ref{lem:q-of-space}
implies $\realization\cf X$ is just $\ints^{\bullet}X$ with its
outer copy of $\ints$ replaced by $\pgam*$. We immediately conclude:
\begin{cor}
\label{cor:coalgebrainducescosimplicial}Let $X$ be a pointed, reduced
simplicial set. Then there exists a natural pointwise trivial fibration
of cosimplicial spaces 
\[
\hat{\gamma}_{X}:\hat{Q}\cf X\to\integers^{\bullet}X
\]
with a pointwise natural trivial cofibration 
\[
\hat{\iota}_{X}:\integers^{\bullet}X\to\hat{Q}\cf X
\]
where $\integers^{\bullet}X$ is the cosimplicial $\ints$-resolution
of $X$ defined in example~4.1 of \cite[chapter~VII, section~4]{Goerss-Jardine}. 

Consequently, 
\begin{enumerate}
\item there exists a natural trivial fibration 
\[
\bar{\gamma}_{X}:\mathcal{H}\cf X\to\zinfty X
\]
that makes the diagram 
\begin{equation}
\xymatrix{{X}\ar[r]^{h_{X}\quad}\ar[rd]_{\phi_{X}} & {\realization\cf C}\ar[d]^{\bar{\gamma}_{X}}\\
{} & {\ints_{\infty}X}
}
\label{eq:coalgebrainducescosimplicial1}
\end{equation}
commute, where $\phi_{X}:X\to\ints_{\infty}X$ is the canonical map
(see 4.2 in \cite[chapter~I]{Bousfield-Kan}).
\item there exists a natural right inverse $\bar{\iota}_{X}:\zinfty X\to\mathcal{H}\cf X$
to $\bar{\gamma}_{X}$ that is a weak equivalence.
\end{enumerate}
It follows that $h_{X}:X\to\realization\cf X$ is an integral homology
equivalence if $X$ is $\ints$-good (in the sense of \cite[chapter~I]{Bousfield-Kan}.\end{cor}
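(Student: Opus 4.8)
The plan is to establish the comparison levelwise and then apply the total-space functor. First I would identify the two cosimplicial spaces in each codimension. By lemma~\ref{lem:q-of-space}, $Q^{n}\cf X=\cf{\pz^{n+1}X}$, and iterating corollary~\ref{cor:classifying-map-space-hurewicz} (with $X$ replaced by $\pz^{n}X$) gives $\cf{\pz^{n+1}X}=\lcell\forgetful{\cf{\pz^{n}X}}$. Applying $\fc *$ and invoking corollary~\ref{cor:fccfgamma} then yields
\[
\qhat^{n}\cf X=\fc{\cf{\pz^{n+1}X}}=\pgam\forgetful{\cf{\pz^{n}X}}=\pgam C(\pz^{n}X),
\]
while the resolution satisfies $(\integers^{\bullet}X)^{n}=\pz^{n+1}X=\pz(\pz^{n}X)$. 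Thus in each codimension the two objects are exactly the source and target of the maps produced by lemma~\ref{lem:gammacorrespondence} applied to the pointed reduced simplicial set $\pz^{n}X$ (note $\pz^{n}X$ is a reduced simplicial abelian group, so the lemma applies).

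Accordingly I would define $\hat{\gamma}_{X}$ and $\hat{\iota}_{X}$ codimensionwise by $\hat{\gamma}_{X}^{n}=\gamma_{\pz^{n}X}$ and $\hat{\iota}_{X}^{n}=\iota_{\pz^{n}X}$. That each level is a trivial fibration, respectively a trivial cofibration, and that $\hat{\gamma}_{X}^{n}\circ\hat{\iota}_{X}^{n}=1$, is immediate from lemma~\ref{lem:gammacorrespondence}; naturality of $\gamma$ and $\iota$ there also gives naturality of $\hat{\gamma}_{X}$ and $\hat{\iota}_{X}$ in $X$. The substantive point --- and the main obstacle --- is verifying that these levelwise maps assemble into morphisms of cosimplicial spaces, i.e.\ that they commute with the coface and codegeneracy operators. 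Here I would use the explicit descriptions of the structure maps of $Q^{\bullet}\cf X$ in lemma~\ref{lem:q-of-space} (built from $\pz^{i}h\pz^{n-i+1}$ and $\pz^{i}\gamma\pz^{n-i}$) together with the naturality of $\gamma$ and $\iota$: since the cosimplicial operators of both $\qhat\cf X$ and $\integers^{\bullet}X$ are induced by the same simplicial maps of the underlying sets, each comparison square reduces to an instance of that naturality. This produces the desired pointwise trivial fibration $\hat{\gamma}_{X}$ with pointwise trivial-cofibration section $\hat{\iota}_{X}$.

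For the consequences I would apply the total-space functor, using $\realization\cf X=\totalspace{\qhat\cf X}$ (definition~\ref{def:Hurewiczfunctor}) and $\totalspace{\integers^{\bullet}X}=\zinfty X$. Both $\qhat\cf X$ and $\integers^{\bullet}X$ are group-like cosimplicial spaces --- for the former by the argument in proposition~\ref{prop:hurewicz-realization-fibrant}, since every coface except the $0^{\text{th}}$ is a morphism of simplicial abelian groups --- so the results in chapter~X of \cite{Bousfield-Kan} show that $\totalspace *$ carries the pointwise trivial fibration $\hat{\gamma}_{X}$ to a trivial fibration $\bar{\gamma}_{X}:\realization\cf X\to\zinfty X$ and the pointwise weak equivalence $\hat{\iota}_{X}$ to a weak equivalence $\bar{\iota}_{X}:\zinfty X\to\realization\cf X$, with $\bar{\gamma}_{X}\circ\bar{\iota}_{X}=\totalspace{\hat{\gamma}_{X}\circ\hat{\iota}_{X}}=1$. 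Commutativity of diagram~\ref{eq:coalgebrainducescosimplicial1} follows from the construction of $h_{X}$ in proposition~\ref{prop:canonical-x-hx}, whose defining composite factors through the augmentation $\phi_{X}:X\to\integers^{\bullet}X$, so that $\bar{\gamma}_{X}$ collapses $h_{X}$ back to $\phi_{X}$.

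Finally, the homology statement is a two-out-of-three argument. When $X$ is $\integers$-good the canonical map $\phi_{X}:X\to\zinfty X$ is an integral homology equivalence by definition (\cite[chapter~I]{Bousfield-Kan}); since $\bar{\gamma}_{X}$ is a trivial fibration, hence a homology equivalence, and $\phi_{X}=\bar{\gamma}_{X}\circ h_{X}$, it follows that $h_{X}$ is an integral homology equivalence. I expect the cosimplicial-compatibility verification in the second paragraph to be the only genuinely delicate step; the remaining passages are formal once the Bousfield--Kan machinery for group-like cosimplicial spaces is invoked.
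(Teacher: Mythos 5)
Your proposal is correct and follows essentially the same route as the paper: the paper likewise defines $\hat{\gamma}_{X}$ and $\hat{\iota}_{X}$ by applying the $\gamma$ and $\iota$ of lemma~\ref{lem:gammacorrespondence} in each level of $\qhat\cf X$, observes that the resulting maps are surjective weak equivalences of group-like cosimplicial spaces (hence pointwise trivial fibrations), passes to total spaces to obtain $\bar{\gamma}_{X}$ and $\bar{\iota}_{X}$, and reads off the commutativity of diagram~\ref{eq:coalgebrainducescosimplicial1} from the construction of $h_{X}$ in proposition~\ref{prop:canonical-x-hx}. If anything you give more detail than the paper does; the one caveat is that your naturality argument for the cosimplicial compatibility covers only the operators of the form $\pz g$ for a simplicial map $g$, whereas the extreme operators $\delta^{0}$ and $s^{0}$ are the monad unit and multiplication and so require the explicit formulas of corollary~\ref{cor:classifying-map-space-hurewicz} and lemma~\ref{lem:q-of-space} (or compatibility of $\gamma$, $\iota$ with the $\pz$-monad structure) rather than bare naturality --- a point the paper's own proof elides entirely.
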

\begin{rem*}
The integral homology equivalence in statement~2 is not necessarily
a weak equivalence of cellular coalgebras. Later papers in this series
will take up the question of the model structure on the category of
cellular coalgebras.\end{rem*}
\begin{proof}
Simply define $\hat{\gamma}_{X}$ and $\hat{\iota}_{X}$ to be $\gamma_{X}$
and $\iota_{X}$, respectively, in each level of $\qhat\cf X$ ---
see lemma~\ref{lem:gammacorrespondence}. The maps 
\[
\hat{\gamma}_{X}:\hat{Q}\cf X\to\integers^{\bullet}X
\]
are all surjective morphisms of group-like cosimplicial spaces and
weak equivalences, so they are \emph{trivial fibrations. }They induce
a trivial fibration of total spaces $\bar{\gamma}_{X}:\mathcal{H}\cf X\to\zinfty X$
and the induced map $\bar{\iota}_{X}:\zinfty X\to\mathcal{H}\cf X$
is an inverse, hence also a weak equivalence. 

The commutativity of diagram~\ref{eq:coalgebrainducescosimplicial1}
immediately follows from the way the map $h_{X}$ was defined in proposition~\ref{prop:canonical-x-hx}.\end{proof}
\begin{cor}
\label{cor:maps-cf-big-diagram}If $X$ and $Y$ are pointed, reduced
simplicial sets, and 
\[
f:\cf X\to\cf Y
\]
 is a morphism of $\S$-coalgebras, $f$ induces a map
\[
\realization f':\ints_{\infty}X\to\ints_{\infty}Y
\]
that makes the diagram\textup{
\[
\xymatrix{{\cf X}\ar[rrr]^{f}\ar[rd]_{\cf{h_{X}}}\ar[dd]_{\cf{\phi_{X}}} &  &  & {\cf Y}\ar[ld]^{\cf{h_{Y}}}\ar[dd]^{\cf{\phi_{Y}}}\\
 & {\cf{\realization\cf X}}\ar[ld]_{\cf{\bar{\gamma}_{X}}}\ar[r]^{\cf{\realization f}} & {\cf{\realization\cf Y}}\ar[rd]^{\cf{\bar{\gamma}_{Y}}}\\
{\cf{\ints_{\infty}X}}\ar[rrr]_{\cf{\realization f'}} &  &  & {\cf{\ints_{\infty}Y}}
}
\]
 }

commute, where 
\[
\phi_{X}:X\to\ints_{\infty}X
\]
 is the canonical map (see 4.2 in \cite[chapter~I]{Bousfield-Kan}). 

If $f$ is surjective and a homology equivalence, then $\realization f$
and $\realization f'$ are weak equivalences.\end{cor}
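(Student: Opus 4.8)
The plan is to transport the functorial map $\realization f\colon\realization\cf X\to\realization\cf Y$ back across the comparison maps of corollary~\ref{cor:coalgebrainducescosimplicial}. Both $\cf X$ and $\cf Y$ are cellular by corollary~\ref{cor:classifying-map-space-hurewicz}, so $\realization f$ is defined, and corollary~\ref{cor:coalgebrainducescosimplicial} furnishes the natural trivial fibration $\bar\gamma_X\colon\realization\cf X\to\zinfty X$ together with a weak-equivalence right inverse $\bar\iota_X\colon\zinfty X\to\realization\cf X$ satisfying $\bar\gamma_X\circ\bar\iota_X=1$. I would define
\[
\realization f'=\bar\gamma_Y\circ\realization f\circ\bar\iota_X\colon\zinfty X\to\zinfty Y,
\]
and similarly reuse $\bar\gamma_Y$, $\bar\iota_Y$ for $Y$.

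To verify the large diagram I would split it into four cells. The upper square (the middle row together with the Hurewicz maps $\cf{h_X}$, $\cf{h_Y}$) is exactly diagram~\ref{eq:canonical-x-hx1} of proposition~\ref{prop:canonical-x-hx}. The two slanted triangles are the image under $\cf *$ of diagram~\ref{eq:coalgebrainducescosimplicial1}, applied to $X$ and to $Y$; they record the relations $\bar\gamma_X\circ h_X=\phi_X$ and $\bar\gamma_Y\circ h_Y=\phi_Y$. For the bottom trapezoid, the definition of $\realization f'$ gives $\realization f'\circ\bar\gamma_X=\bar\gamma_Y\circ\realization f\circ(\bar\iota_X\circ\bar\gamma_X)$, so the desired identity $\realization f'\circ\bar\gamma_X=\bar\gamma_Y\circ\realization f$ comes down to the behaviour of $\bar\iota_X\circ\bar\gamma_X$. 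Granting this, chaining the four cells produces the outer commutativity $\cf{\realization f'}\circ\cf{\phi_X}=\cf{\phi_Y}\circ f$ after applying $\cf *$.

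The weak-equivalence statement is then short. Since $f\colon\cf X\to\cf Y$ is a surjection of cellular coalgebras and a homology equivalence, proposition~\ref{prop:hurewicz-realization-fibrant} shows that $\realization f$ is a trivial fibration, hence a weak equivalence. As $\bar\gamma_X$ and $\bar\gamma_Y$ are weak equivalences and $\realization f'\circ\bar\gamma_X$ agrees with the weak equivalence $\bar\gamma_Y\circ\realization f$, the two-out-of-three property forces $\realization f'$ to be a weak equivalence as well.

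I expect the bottom trapezoid to be the main obstacle. The fibration $\bar\gamma_X$ is split on only one side: one has $\bar\gamma_X\circ\bar\iota_X=1$, but $\bar\iota_X\circ\bar\gamma_X$ is the realization of the idempotent retracting a full chain complex onto its normalization, which is merely chain-homotopic to the identity rather than equal to it. Thus the trapezoid, and with it the outer square, commutes only up to homotopy. The remaining work is therefore to argue that this level of precision suffices: after applying $\cf *$ the cells commute up to chain homotopy, which is exactly what is needed both for the homology-equivalence content of the corollary and for the later classification of nilpotent types; and the two-out-of-three argument above is unaffected, since it uses only that $\bar\gamma_X$, $\bar\gamma_Y$, and $\realization f$ are weak equivalences.
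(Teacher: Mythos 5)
Your proposal is, cell for cell, the route the paper itself takes: the paper also defines $\realization f'=\bar{\gamma}_{Y}\circ\realization f\circ\bar{\iota}_{X}$, verifies the upper quadrilateral by proposition~\ref{prop:canonical-x-hx}, the two slanted triangles by diagram~\ref{eq:coalgebrainducescosimplicial1}, and gets the final weak-equivalence statement from proposition~\ref{prop:hurewicz-realization-fibrant} together with corollary~\ref{cor:coalgebrainducescosimplicial} (your two-out-of-three step can even be shortened: by its very definition $\realization f'$ is a composite of three weak equivalences).

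The one place you diverge is the bottom trapezoid, and your skepticism there is warranted rather than being a defect of your attempt. The paper disposes of that cell with the single assertion that it ``commutes because $\bar{\gamma}_{X}\circ\bar{\iota}_{X}=1$''; but, exactly as you observe, that identity only yields $\realization f'\circ\bar{\gamma}_{X}=\bar{\gamma}_{Y}\circ\realization f\circ(\bar{\iota}_{X}\circ\bar{\gamma}_{X})$, and removing the factor $\bar{\iota}_{X}\circ\bar{\gamma}_{X}$ requires it to be the identity --- which it is not: levelwise it is $\pgam$ applied to the Dold--Kan idempotent $C(Z)\to NC(Z)\to C(Z)$, which is chain-homotopic to, but distinct from, the identity. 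Equivalently, strict commutativity of the outer square reduces to the strict identity $\bar{\iota}_{X}\circ\phi_{X}=h_{X}$; both sides are lifts of $\phi_{X}$ through the trivial fibration $\bar{\gamma}_{X}$, hence homotopic over $\zinfty X$, but they differ by precisely that normalization idempotent, and since $f$ is an arbitrary coalgebra morphism (not induced by a simplicial map) there is no naturality available to make $\bar{\gamma}_{Y}\circ\realization f$ coequalize them. So what you prove --- commutativity up to homotopy, which suffices for the weak-equivalence clause and for the downstream use in corollary~\ref{cor:cf-classifies-nilpotent} --- is also all that the paper's own argument actually establishes; the strict commutativity literally asserted in the statement is not justified there either. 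In short, your proposal matches the paper's proof, and the obstacle you flag is a genuine soft spot of the paper's argument, not a gap you introduced.
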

\begin{rem*}
If we remove one level of $\cf *$, we get a commutative diagram of
\emph{spaces:}
\[
\xymatrix{{X}\ar[rd]_{h_{X}}\ar[dd]_{\phi_{X}} &  &  & {Y}\ar[ld]^{h_{Y}}\ar[dd]^{\phi_{Y}}\\
 & {\realization\cf X}\ar[ld]_{\bar{\gamma}_{X}}\ar[r]^{\realization f} & {\realization\cf Y}\ar[rd]^{\bar{\gamma}_{Y}}\\
{\ints_{\infty}X}\ar[rrr]_{\realization f'} &  &  & {\ints_{\infty}Y}
}
\]
\end{rem*}
\begin{proof}
The map $\realization f'$ is the composite
\[
\ints_{\infty}X\xrightarrow{\bar{\iota}_{X}}\realization\cf X\xrightarrow{\realization f}\realization\cf Y\xrightarrow{\bar{\gamma}_{Y}}\ints_{\infty}Y
\]

The upper quadrilateral commutes by proposition~\ref{prop:canonical-x-hx}.
The left and right triangles commute due to diagram\ref{eq:coalgebrainducescosimplicial1}.
The bottom quadrilateral commutes because $\bar{\gamma}_{X}\circ\bar{\iota}_{X}=1$,
which implies that the outer square also commutes. The final statement
follows from corollary~\ref{cor:coalgebrainducescosimplicial}.
\end{proof}
If spaces are \emph{nilpotent,} we can say a bit more:
\begin{cor}
\label{cor:cfxy-y-nilpotent}Under the hypotheses of corollary~\ref{cor:maps-cf-big-diagram},
if $Y$ is also nilpotent then $\phi_{Y}:Y\to\ints_{\infty}Y$ is
a weak equivalence with a homotopy-inverse, $\phi'$, that fits into
a commutative diagram 
\[
\xymatrix{{X}\ar[rd]_{h_{X}}\ar[dd]_{\phi_{X}} &  &  & {Y}\ar[ld]^{h_{Y}}\ar[dd]_{\phi_{Y}}\\
 & {\realization\cf X}\ar[ld]_{\bar{\gamma}_{X}}\ar[r]^{\realization f} & {\realization\cf Y}\ar[rd]^{\bar{\gamma}_{Y}}\\
{\ints_{\infty}X}\ar[rrr]_{\realization f'} &  &  & {\ints_{\infty}Y}\ar@/_{1pc}/[uu]_{\phi'}
}
\]
where
\begin{enumerate}
\item $h_{Y}:Y\to\realization\cf Y$ is a weak equivalence
\item a morphism of cellular coalgebras, $f:\cf X\to\cf Y$, induces a map
of simplicial sets 
\[
X\xrightarrow{\phi_{X}}\ints_{\infty}X\xrightarrow{\realization f'}\ints Y\xrightarrow{\phi'}Y
\]

\end{enumerate}
\end{cor}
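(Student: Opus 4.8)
The plan is to reduce the entire statement to a single classical input --- the Bousfield--Kan theorem that a nilpotent space is $\ints$-complete --- together with the commutative diagrams already established in corollary~\ref{cor:coalgebrainducescosimplicial} and corollary~\ref{cor:maps-cf-big-diagram}. Concretely, since $Y$ is nilpotent, \cite{Bousfield-Kan} shows that $Y$ is $\ints$-good and that the completion map $\phi_{Y}:Y\to\ints_{\infty}Y$ is a weak equivalence; because $\ints_{\infty}Y$ is a Kan space I may then choose a homotopy-inverse $\phi'$ (after passing to a Kan model of $Y$ if necessary). This one fact simultaneously supplies the map $\phi'$ appearing in the target diagram and the hypothesis needed to run a $2$-out-of-$3$ argument for $h_{Y}$.

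First I would establish claim~(1), that $h_{Y}:Y\to\realization\cf Y$ is a weak equivalence. Corollary~\ref{cor:coalgebrainducescosimplicial}, applied to $Y$, furnishes a \emph{trivial} fibration $\bar{\gamma}_{Y}:\realization\cf Y\to\ints_{\infty}Y$ and, via the commuting triangle that is the $Y$-instance of diagram~\ref{eq:coalgebrainducescosimplicial1}, the identity $\bar{\gamma}_{Y}\circ h_{Y}=\phi_{Y}$. Since $\bar{\gamma}_{Y}$ is a weak equivalence and $\phi_{Y}$ is a weak equivalence by the nilpotence input, the $2$-out-of-$3$ property forces $h_{Y}$ to be a weak equivalence. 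This is precisely the step where nilpotence of $Y$, rather than mere $\ints$-goodness, is needed: it upgrades $\phi_{Y}$ from a homology equivalence to a genuine weak equivalence.

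Next I would assemble the diagram. The underlying space-level square of corollary~\ref{cor:maps-cf-big-diagram} (displayed in the remark following it) already commutes: the upper quadrilateral commutes by proposition~\ref{prop:canonical-x-hx}, the left and right triangles commute by diagram~\ref{eq:coalgebrainducescosimplicial1}, and the bottom quadrilateral commutes because $\bar{\gamma}_{X}\circ\bar{\iota}_{X}=1$. Adjoining the chosen homotopy-inverse $\phi'$ along the right-hand edge produces the asserted diagram, which now commutes up to homotopy since $\phi'\circ\phi_{Y}\simeq 1_{Y}$. Claim~(2) is then immediate: once $\phi'$ is fixed, the composite $X\xrightarrow{\phi_{X}}\ints_{\infty}X\xrightarrow{\realization f'}\ints_{\infty}Y\xrightarrow{\phi'}Y$ is a well-defined map of simplicial sets, obtained by tracing the bottom and right edges of the diagram.

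The main obstacle lies not in the formal bookkeeping --- which is entirely $2$-out-of-$3$ together with diagram-chasing against results already in hand --- but in correctly invoking the Bousfield--Kan completeness theorem for nilpotent spaces and in handling the fibrancy subtlety needed to promote the weak equivalence $\phi_{Y}$ to an honest homotopy-inverse $\phi'$. One must keep in mind that $\ints_{\infty}Y$ is always Kan while $Y$ need not be, so the homotopy-inverse and the resulting commutativity should be understood in the homotopy category (or after a Kan fibrant replacement of $Y$); this is the only place where anything beyond formal manipulation of the earlier corollaries is required.
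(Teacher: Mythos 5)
Your proposal follows essentially the same route as the paper: the paper's entire proof is the single citation of Bousfield--Kan (chapter~V, proposition~3.5) for the statement that $\phi_{Y}$ is a weak equivalence, with the diagram assembly, the $2$-out-of-$3$ argument for $h_{Y}$, and claim~(2) left implicit as consequences of corollaries~\ref{cor:coalgebrainducescosimplicial} and \ref{cor:maps-cf-big-diagram}, exactly as you spell them out. Your explicit handling of the fibrancy issue (that $\phi'$ exists only up to homotopy, via a Kan replacement of $Y$ if needed) is a point the paper silently glosses over, and is a welcome addition rather than a deviation.
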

\begin{proof}
The main statement (that $\phi_{Y}$ is a weak equivalence) follows
from proposition~3.5 in chapter~V of \cite{Bousfield-Kan}.\end{proof}
\begin{cor}
\label{cor:cf-classifies-nilpotent}If $X$ and $Y$ are pointed nilpotent
reduced simplicial sets, then $X$ is weakly equivalent to $Y$ if
and only if there exists a morphism of $\S$-coalgebras
\[
f:\cf X\to\cf Y
\]
that is an integral homology equivalence.\end{cor}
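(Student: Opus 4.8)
The statement is a biconditional and I would prove the two implications separately. The reverse implication---that a coalgebra homology equivalence forces a weak equivalence of the underlying spaces---is the substantial one and is assembled almost entirely from the corollaries already established; the forward implication is comparatively soft, resting on the functoriality of $\cf *$ together with nilpotency.

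For the forward direction, suppose $X$ is weakly equivalent to $Y$. A weak equivalence of simplicial sets is an integral homology isomorphism, so were the equivalence a single map $g\colon X\to Y$ we could simply take $f=\cf g$, a morphism of $\S$-coalgebras inducing an isomorphism on homology. To handle a general zig-zag I would route through the $\ints$-completion. Since $X$ and $Y$ are nilpotent, corollary~\ref{cor:cfxy-y-nilpotent} supplies a weak equivalence $\phi_X\colon X\to\zinfty X$ and a homotopy inverse $\phi'\colon\zinfty Y\to Y$ to $\phi_Y$. The completions are Kan complexes, so the weak equivalence $X\simeq Y$ is represented there by an honest map $c\colon\zinfty X\to\zinfty Y$, and applying $\cf *$ and composing yields
\[
f=\cf{\phi'}\circ\cf c\circ\cf{\phi_X}\colon\cf X\to\cf Y .
\]
Each factor is $\cf *$ applied to a weak equivalence, hence an integral homology isomorphism, so the composite $f$ is the desired homology equivalence.

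For the reverse direction, suppose $f\colon\cf X\to\cf Y$ is a homology equivalence. The plan is to produce a weak equivalence $\realization f$ and then feed it into the square of corollary~\ref{cor:maps-cf-big-diagram}. First I would verify that $\qhat f$ is a \emph{pointwise} weak equivalence of cosimplicial spaces: by lemma~\ref{lem:q-of-space} and corollaries~\ref{cor:fccfgamma} and \ref{cor:classifying-map-space-hurewicz}, each codimension of $\qhat\cf X$ is obtained by applying $\pgam$ and $\cf *$ to chain-level data built from $\cf X$ by iterating $\pz$, and both $\pgam$ (Dold--Kan) and $\cf *$ carry homology isomorphisms to weak equivalences; hence $f$ being a homology isomorphism makes every level of $\qhat f$ a weak equivalence. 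By the proof of proposition~\ref{prop:hurewicz-realization-fibrant} the cosimplicial spaces $\qhat\cf X$ and $\qhat\cf Y$ are \emph{group-like}, so a pointwise weak equivalence between them induces a weak equivalence on total spaces (chapter~X of \cite{Bousfield-Kan}); thus $\realization f=\totalspace{\qhat f}$ is a weak equivalence. Since corollary~\ref{cor:maps-cf-big-diagram} writes $\realization f'=\bar\gamma_Y\circ\realization f\circ\bar\iota_X$ with $\bar\iota_X$ and $\bar\gamma_Y$ the weak equivalences of corollary~\ref{cor:coalgebrainducescosimplicial}, it follows that $\realization f'\colon\zinfty X\to\zinfty Y$ is a weak equivalence.

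I would close with the commuting square of spaces displayed in the remark following corollary~\ref{cor:maps-cf-big-diagram}, whose vertical maps $\phi_X\colon X\to\zinfty X$ and $\phi_Y\colon Y\to\zinfty Y$ are weak equivalences because $X$ and $Y$ are nilpotent (corollary~\ref{cor:cfxy-y-nilpotent}). Combined with $\realization f'$ this gives the zig-zag $X\xrightarrow{\phi_X}\zinfty X\xrightarrow{\realization f'}\zinfty Y\xleftarrow{\phi_Y}Y$ of weak equivalences, so $X\simeq Y$. The main obstacle is exactly the step establishing that $\realization f$ is a weak equivalence: proposition~\ref{prop:hurewicz-realization-fibrant} and corollary~\ref{cor:maps-cf-big-diagram} are stated for \emph{surjective} homology equivalences---surjectivity being what upgrades $\realization f$ to a fibration---whereas here $f$ is only a homology equivalence. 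I expect the real work to lie in dispensing with surjectivity, either through the group-like/pointwise argument above (which delivers a weak equivalence of total spaces with no fibration hypothesis) or by functorially replacing $f$ with a surjection without altering its effect on homology.
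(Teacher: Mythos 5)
Your proposal is correct, but the engine of your reverse implication is genuinely different from the paper's. The paper never analyzes $\qhat f$ levelwise: it observes that $\cf{h_{X}}$ and $\cf{h_{Y}}$ are integral homology equivalences (corollary~\ref{cor:coalgebrainducescosimplicial}, since nilpotent spaces are $\ints$-good), deduces by two-out-of-three in the commutative square of proposition~\ref{prop:canonical-x-hx} that $\cf{\realization f}$ is a homology equivalence, transfers nilpotency to $\realization\cf X$ and $\realization\cf Y$ along the weak equivalences $h_{X}$, $h_{Y}$ of corollary~\ref{cor:cfxy-y-nilpotent}, and then invokes the classical fact that an integral homology equivalence between \emph{nilpotent} spaces is a weak equivalence; the zig-zag $X\to\realization\cf X\to\realization\cf Y\leftarrow Y$ finishes the argument. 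You instead prove that $\realization f$ is a weak equivalence intrinsically: $\qhat f$ is a pointwise weak equivalence (by the Dold--Kan/rigidity analysis of the levels $Q^{n}f$, which does go through, using corollaries~\ref{cor:classifying-map-space-hurewicz} and \ref{cor:fccfgamma} and the uniqueness clause of theorem~\ref{thm:(Rigidity-theorem)} to identify $Q^{n}f$ with $\cf{\pgam(\cdot)}$ of a quasi-isomorphism), and the total-space functor carries pointwise weak equivalences between group-like, hence fibrant, cosimplicial spaces to weak equivalences. This buys something real: it establishes the stronger statement that \emph{any} homology equivalence $f:\cf X\to\cf Y$ --- with no surjectivity and no nilpotency hypothesis --- induces a weak equivalence $\zinfty X\to\zinfty Y$, thereby removing the surjectivity hypothesis of corollary~\ref{cor:maps-cf-big-diagram} that you correctly flagged as the apparent obstacle; the paper circumvents that same obstacle not by meeting it head-on but by the two-out-of-three trick, at the cost of invoking the nilpotent homology-implies-homotopy theorem. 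In both arguments nilpotency then enters only where it must: to make $\phi_{X},\phi_{Y}$ (equivalently $h_{X},h_{Y}$) weak equivalences. Your forward direction is also more scrupulous than the paper's, which implicitly assumes the weak equivalence is realized by a single map $g:X\to Y$ and takes $f=\cf g$; your rectification through the completions is fine, with the minor caveat that applying $\cf *$ to $\zinfty X$ and $\zinfty Y$ (which need not be reduced) is a license the paper itself takes in corollary~\ref{cor:maps-cf-big-diagram}.
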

\begin{proof}
Clearly, if 
\[
g:X\to Y
\]
 is a weak equivalence, then 
\[
\cf g:\cf X\to\cf Y
\]
 is a morphism of cellular coalgebras that is an integral homology
equivalence.

Conversely, diagram~\ref{eq:coalgebrainducescosimplicial1} in corollary~\ref{cor:coalgebrainducescosimplicial}
implies that 
\begin{eqnarray*}
\cf X & \xrightarrow{\cf{h_{X}}} & \cf{\realization\cf X}\\
\cf Y & \xrightarrow[\cf{h_{Y}}]{} & \cf{\realization\cf Y}
\end{eqnarray*}
are integral homology equivalences. It follows that the homology equivalence
\[
f:\cf X\to\cf Y
\]
 induces a homology equivalence 
\begin{equation}
\cf{\realization f}:\cf{\realization\cf X}\to\cf{\realization\cf Y}\label{eq:cf-classifies-nilpotent1}
\end{equation}
Now corollary~\ref{cor:cfxy-y-nilpotent} implies the existence of
weak equivalences
\begin{eqnarray*}
X & \xrightarrow{h_{X}} & \realization\cf X\\
Y & \xrightarrow[h_{Y}]{} & \realization\cf Y
\end{eqnarray*}
 so that $\realization\cf X$ and $\realization\cf Y$ are both nilpotent.
It follows that integral homology equivalence in \ref{eq:cf-classifies-nilpotent1}
defines a weak equivalence: 
\[
\realization f:\realization\cf X\to\realization\cf Y
\]
The conclusion follows.
\end{proof}
\appendix

\section{\label{sec:Proof-of-theorem}Proof of theorem~\ref{thm:injectivity-theorem}}

We begin with a general result:
\begin{lem}
\label{lem:diagonals-linearly-independent}Let $C$ be a free abelian
group, let 
\[
\hat{C}=\ints\oplus\prod_{i=1}^{\infty}C^{\otimes i}
\]

Let $e:C\to\hat{C}$ be the function that sends $c\in C$ to
\[
(1,c,c\otimes c,c\otimes c\otimes c,\dots)\in\hat{C}
\]
For any integer $t>1$ and any set $\{c_{1},\dots,c_{t}\}\in C$ of
distinct, nonzero elements, the elements 
\[
\{e(c_{1}),\dots,e(c_{t})\}\in\rats\otimes_{\ints}\hat{C}
\]
are linearly independent over $\rats$. It follows that $e$ defines
an injective function
\[
\bar{e}:\ints[C]\to\hat{C}
\]
\end{lem}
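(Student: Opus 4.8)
The plan is to reduce the statement to a Vandermonde determinant computation. Suppose we are given a $\rats$-linear relation $\sum_{i=1}^{t}\lambda_{i}e(c_{i})=0$ in $\rats\otimes_{\ints}\hat{C}$ with $\lambda_{i}\in\rats$. Projecting onto the degree-$n$ factor $\rats\otimes_{\ints}C^{\otimes n}$ of $\hat{C}$ (legitimate since the sum is finite), the relation becomes $\sum_{i=1}^{t}\lambda_{i}\,c_{i}^{\otimes n}=0$ for every $n\ge0$. The goal is to show each $\lambda_{i}=0$, and for this it suffices to collapse these tensor relations to scalar relations by means of a suitable linear functional.

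First I would produce a $\rats$-linear functional $\phi\colon\rats\otimes_{\ints}C\to\rats$ whose values $x_{i}:=\phi(c_{i})$ are pairwise distinct. Because $C$ is free abelian, $\rats\otimes_{\ints}C$ is a $\rats$-vector space and the finitely many differences $c_{i}-c_{j}$ (for $i\ne j$) are nonzero; the set of functionals annihilating a fixed nonzero vector is a proper subspace of the finite-dimensional dual of $\mathrm{span}_{\rats}(c_{1},\dots,c_{t})$. Since $\rats$ is infinite, a vector space is never a finite union of proper subspaces, so a $\phi$ avoiding all of these subspaces exists; I then extend it arbitrarily to all of $\rats\otimes_{\ints}C$. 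With such a $\phi$ the scalars $x_{i}$ are pairwise distinct.

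Now I apply $\phi^{\otimes n}\colon(\rats\otimes_{\ints}C)^{\otimes n}\to\rats$ to the degree-$n$ relation. Since $\phi^{\otimes n}(c_{i}^{\otimes n})=\phi(c_{i})^{n}=x_{i}^{\,n}$, I obtain $\sum_{i=1}^{t}\lambda_{i}x_{i}^{\,n}=0$ for all $n\ge0$; taking $n=0,1,\dots,t-1$ yields a homogeneous linear system whose coefficient matrix is the Vandermonde matrix $(x_{i}^{\,n})$. As the $x_{i}$ are distinct, this matrix is invertible, forcing $\lambda_{1}=\dots=\lambda_{t}=0$. This establishes the asserted linear independence over $\rats$.

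Finally, injectivity of $\bar{e}\colon\ints[C]\to\hat{C}$ follows formally. Any element of the kernel is supported on finitely many basis elements $[c_{i}]$, giving an integral --- hence rational --- relation among the $e(c_{i})$; the independence just established (extended to allow $c_{i}=0$, for which $e(0)=(1,0,0,\dots)$ affects only the degree-$0$ coordinate and is handled by the same Vandermonde argument with node $x=0$) forces all coefficients to vanish, and the passage from $\rats$- to $\ints$-coefficients is immediate since $\hat{C}$ is torsion-free. The only step requiring genuine care is the construction of the separating functional $\phi$; everything downstream is the standard Vandermonde argument.
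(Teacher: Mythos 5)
Your proof is correct, but it takes a genuinely different route from the paper's. The paper truncates $\hat{C}$ to $\ints\oplus\bigoplus_{i=1}^{t-1}C^{\otimes i}$ and then builds a single multiplicative map into a polynomial ring: a basis $\{b_{\alpha}\}$ of $C$ is sent to indeterminates $X_{\alpha}$, and tensor products to products of linear forms, so that $e(c_{i})$ goes to $1+f(c_{i})+\cdots+f(c_{i})^{t-1}$. A dependence relation then forces the vanishing of a Vandermonde determinant with entries in $\rats[X_{1},X_{2},\dots]$, which factors as $\prod_{i<j}\bigl(f(c_{i})-f(c_{j})\bigr)$ and is nonzero because $f$ is injective on $C\otimes_{\ints}\rats$ and the polynomial ring is an integral domain. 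You instead collapse to scalars first: you manufacture a functional $\phi$ separating the $c_{i}$ by avoiding finitely many hyperplanes in the dual (using that a vector space over the infinite field $\rats$ is not a finite union of proper subspaces), then apply $\phi^{\otimes n}$ to obtain the numerical system $\sum_{i}\lambda_{i}x_{i}^{n}=0$ and the ordinary scalar Vandermonde. In effect you evaluate the paper's linear forms at a sufficiently generic rational point: what the paper's universal map buys is that no genericity or choice argument is needed, since distinctness of the $c_{i}$ automatically makes the forms $f(c_{i})$ distinct; what your specialization buys is that the determinant computation happens over $\rats$ rather than over a polynomial ring, at the cost of the hyperplane-avoidance lemma. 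One point where you are more careful than the paper: for the injectivity of $\bar{e}$ the basis element $[0]\in\ints[C]$ must be allowed, and your observation that $e(0)=(1,0,0,\dots)$ fits into the same Vandermonde argument as the node $x=0$ fills in what the paper dismisses with ``The second conclusion follows.''
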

\begin{proof}
We will construct a vector-space morphism
\begin{equation}
f:\rats\otimes_{\ints}\hat{C}\to V\label{eq:diagonals-linearly-independent1}
\end{equation}
such that the images, $\{f(e(c_{i}))\}$, are linearly independent.
We begin with the ``truncation morphism''
\[
r_{t}:\hat{C}\to\ints\oplus\bigoplus_{i=1}^{t-1}C^{\otimes i}=\hat{C}_{t-1}
\]
which maps $C^{\otimes1}$ isomorphically. If $\{b_{i}\}$ is a $\ints$-basis
for $C$, we define a vector-space morphism 
\[
g:\hat{C}_{t-1}\otimes_{\ints}\rats\to\rats[X_{1},X_{2},\dots]
\]
by setting
\[
g(c)=\sum_{\alpha}z_{\alpha}X_{\alpha}
\]
where $c=\sum_{\alpha}z_{\alpha}b_{\alpha}\in C\otimes_{\ints}\rats$,
and extend this to $\hat{C}_{t-1}\otimes_{\ints}\rats$ via 
\[
g(c_{1}\otimes\cdots\otimes c_{j})=g(c_{1})\cdots g(c_{j})\in\rats[X_{1},X_{2},\dots]
\]
The map in equation~\ref{eq:diagonals-linearly-independent1} is
just the composite
\[
\hat{C}\otimes_{\ints}\rats\xrightarrow{r_{t-1}\otimes1}\hat{C}_{t-1}\otimes_{\ints}\rats\xrightarrow{g}\rats[X_{1},X_{2},\dots]
\]
It is not hard to see that 
\[
p_{i}=f(e(c_{i}))=1+f(c_{i})+\cdots+f(c_{i})^{t-1}\in\rats[X_{1},X_{2},\dots]
\]
 for $i=1,\dots,t$. Since the $f(c_{i})$ are \emph{linear} in the
indeterminates $X_{i}$, the degree-$j$ component (in the indeterminates)
of $f(e(c_{i}))$ is precisely $f(c_{i})^{j}$. It follows that a
linear dependence-relation
\[
\sum_{i=1}^{t}\alpha_{i}\cdot p_{i}=0
\]
with $\alpha_{i}\in\rats$, holds if and only if
\[
\sum_{i=1}^{t}\alpha_{i}\cdot f(c_{i})^{j}=0
\]
 for all $j=0,\dots,t-1$. This is equivalent to $\det M=0$, where
\[
M=\left[\begin{array}{cccc}
1 & 1 & \cdots & 1\\
f(c_{1}) & f(c_{2}) & \cdots & f(c_{t})\\
\vdots & \vdots & \ddots & \vdots\\
f(c_{1})^{t-1} & f(c_{2})^{t-1} & \cdots & f(c_{t})^{t-1}
\end{array}\right]
\]
 Since $M$ is the transpose of the Vandermonde matrix, we get
\[
\det M=\prod_{1\le i<j\le t}(f(c_{i})-f(c_{j}))
\]
Since $f|C\otimes_{\ints}\rats\subset\hat{C}\otimes_{\ints}\rats$
is \emph{injective,} it follows that this \emph{only} vanishes if
there exist $i$ and $j$ with $i\ne j$ and $c_{i}=c_{j}$. The second
conclusion follows.\end{proof}
\begin{prop}
\label{pro:simplicespropertyS}Let $X$ be a simplicial set with $C=\cf X$
and with coalgebra structure 
\[
\Delta_{n}:RS_{n}\otimes\cf X\to\cf X^{\otimes n}
\]
and suppose $RS_{2}$ is generated in dimension $n$ by $e_{n}=\underbrace{[(1,2)|\cdots|(1,2)]}_{n\text{ terms}}$.
If $x\in C$ is the image of a $k$-simplex, then
\[
\Delta(e_{k}\otimes x)=\epsilon_{k}\cdot x\otimes x
\]
where $\epsilon_{k}=(-1)^{k(k-1)/2}$.\end{prop}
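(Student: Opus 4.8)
The plan is to reduce the identity to the computation of a single integer coefficient by a dimension count, and then to determine that coefficient by a downward recursion driven by the fact that $\Delta$ is a chain map. First, since the $\S$-coalgebra structure on $\cf X$ is defined by acyclic models on simplices, $\Delta(e_k\otimes x)$ is a $\ints$-linear combination of tensors $\sigma'\otimes\sigma''$ whose factors are (images of) faces of the $k$-simplex $x$. The element $e_k$ has degree $k$ and $x$ has degree $k$, so the total degree is $2k$ and $\dim\sigma'+\dim\sigma''=2k$; since every face of $x$ has dimension at most $k$, this forces $\dim\sigma'=\dim\sigma''=k$, and the only nondegenerate $k$-face of $x$ is $x$ itself. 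Hence $\Delta(e_k\otimes x)=c_k\,(x\otimes x)$ for a unique $c_k\in\ints$, and it remains to show $c_k=\epsilon_k=(-1)^{k(k-1)/2}$.

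For the recursion I would use the normalized bar resolution $RS_2$: the product of two adjacent copies of $(1,2)$ is the identity, so every internal face of $e_k$ is degenerate and $\partial e_k=(T+(-1)^k)e_{k-1}$ with $T=(1,2)$. Since $\Delta$ is a degree-$0$ chain map that is $S_2$-equivariant (intertwining $T$ with the signed transposition $T_*$ of tensor factors), the Koszul rule gives
\[
\partial\bigl(c_k\,x\otimes x\bigr)=(T_*+(-1)^k)\,\Delta(e_{k-1}\otimes x)+(-1)^k\,\Delta(e_k\otimes\partial x).
\]
The last term vanishes for dimensional reasons: $\partial x$ is a sum of $(k-1)$-faces $y$, and $\Delta(e_k\otimes y)$ would live in degree $2k-1$ supported on faces of $y$ of dimension $\le k-1$, a group that is zero. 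Because $\partial(x\otimes x)\ne0$, this identity pins down $c_k$; concretely, comparing the coefficient of $\partial_0 x\otimes x$ on both sides expresses $c_k$ through the coefficients of $\partial_0 x\otimes x$ and $x\otimes\partial_0 x$ in the sub-top coproduct $\Delta(e_{k-1}\otimes x)$.

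The main obstacle is the sign bookkeeping inside $\Delta(e_{k-1}\otimes x)$, which by the same degree count lies in the span of the $\partial_j x\otimes x$ and $x\otimes\partial_j x$ but whose coefficients must be computed. I expect the cleanest route to be the explicit formula for $\Delta_2$ in \cite{Smith:1994}: for the top cup-index $i=k$ the combinatorial sum collapses to the unique interval decomposition $R_0=\{0\}$, $R_s=\{s-1,s\}$ for $1\le s\le k$, $R_{k+1}=\{k\}$, whose even and odd runs each recover all of $\{0,\dots,k\}$, so that only $\pm\,x\otimes x$ survives and its coefficient is the Koszul sign of separating the even runs from the odd ones. Carried through the recursion above I expect this to yield $c_k=(-1)^{k-1}c_{k-1}$; combined with the base values $c_0=c_1=1$ (the Alexander--Whitney diagonal of a vertex, and a direct check on an edge) and the identity $k(k-1)/2-(k-1)(k-2)/2=k-1$, induction then gives $c_k=(-1)^{k(k-1)/2}$, as claimed.
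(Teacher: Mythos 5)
Your first two reductions are correct, and they in fact parallel the paper's own setup: the degree count forcing $\Delta(e_{k}\otimes x)=c_{k}\,x\otimes x$ is the paper's condition~\ref{eq:big-diag-condition}, the vanishing of $\Delta(e_{k}\otimes\partial x)$ for dimension reasons is used there too, and your chain-map/equivariance identity does correctly express $c_{k}$ in terms of the coefficients $a_{j},b_{j}$ of $\partial_{j}x\otimes x$ and $x\otimes\partial_{j}x$ in the sub-top coproduct $\Delta(e_{k-1}\otimes x)$. The gap is that you never compute those coefficients: the recursion $c_{k}=(-1)^{k-1}c_{k-1}$ is announced as an expectation, and the combinatorial formula you invoke (the unique interval decomposition ``for the top cup-index $i=k$'') describes $\Delta(e_{k}\otimes x)$ itself, not the sub-top coproduct your recursion actually needs --- so the single step carrying all of the content of the proposition is missing.

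This is not a harmless omission, because the properties you allow yourself (equivariance, the chain-map identity, and the Alexander--Whitney base case) determine $c_{k}$ only modulo $2$, so no purely formal completion of your argument can exist. Indeed, any two $S_{2}$-equivariant chain maps $\rs 2\otimes C(\Delta^{k})\to C(\Delta^{k})\otimes C(\Delta^{k})$ extending the Alexander--Whitney diagonal are equivariantly chain-homotopic, and if $H$ is such a homotopy then, for degree reasons, $H(e_{k}\otimes x)=0$ and $H(e_{k}\otimes\partial x)=0$, while $H(e_{k-1}\otimes x)=h'\,x\otimes x$ for some $h'\in\ints$; hence the two top coproducts differ by
\[
(1+(-1)^{k}T_{*})H(e_{k-1}\otimes x)=h'\left(1+(-1)^{k}(-1)^{k^{2}}\right)x\otimes x=2h'\,x\otimes x,
\]
an arbitrary even multiple of $x\otimes x$. (This is the chain-level face of the remark that only $\operatorname{Sq}^{0}$, a mod-$2$ statement, is homotopy-invariant.) The exact sign $(-1)^{k(k-1)/2}$ is therefore a statement about the \emph{specific} acyclic-models construction of \cite{Smith:1994}, and any proof must engage that construction. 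That is precisely what the paper's proof does: it fixes the contracting homotopy $\varphi_{k}$ of equation~\ref{eq:big-diag0}, sets $\Phi_{k}=\varphi_{k}\otimes1+i\circ\epsilon\otimes\varphi_{k}$, runs the inductive definition $f(e_{k}\otimes\sigma)=\Phi_{k}f(\partial e_{k}\otimes\sigma)$, and eliminates every term except one using $\varphi_{k}^{2}=0$ and $\varphi_{k}\circ i\circ\epsilon=0$, reducing to the inductive hypothesis on the single face $[0,\dots,k-1]$ that survives $\varphi_{k}\otimes\varphi_{k}$. To repair your argument you would have to carry out the analogous explicit computation of $\Delta(e_{k-1}\otimes x)$ (or of $\Delta(e_{k}\otimes x)$ directly) from the formulas of \cite{Smith:1994}, at which point you would essentially be reproducing the paper's proof.
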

\begin{rem*}
This is just a chain-level statement that the Steenrod operation $\operatorname{Sq}^{0}$
acts trivially on mod-$2$ cohomology. A weaker form of this result
appeared in \cite{Davis:mco}.\end{rem*}
\begin{proof}
For all $k$, let $\Delta^{k}$ denote the standard $k$-simplex,
whose vertices are $\{[0],\dots,[k]\}$ and whose $j$-faces are $\{[i_{0},\dots,i_{j}]\}$,
with $i_{1}<\cdots<i_{j}$, $j\le k$. Let $C$ be the normalized
chain-complex of $\Delta^{k}$ with augmentation
\[
\epsilon:C\to\ints
\]
that maps all $0$-simplices to $1$ and all others to $0$. Recall
that $(\rs 2)_{n}=\ints[\ints_{2}]$ generated by $ $$e_{n}=[\underbrace{(1,2)|\cdots|(1,2)}_{n\text{ factors}}]$.
Let $T$ be the generator of $\ints_{2}$ --- acting on $C\otimes C$
by swapping the copies of $C$.

We assume that $f(e_{i}\otimes C(\Delta^{j}))\subset C(\Delta^{j})\otimes C(\Delta^{j})$
so that 
\begin{equation}
i>j\implies f(e_{i}\otimes C(\Delta^{j}))=0\label{eq:big-diag-condition}
\end{equation}

Define the contracting homotopy on $C(\Delta^{k})$: \textit{\emph{
\begin{equation}
\varphi_{k}([i_{0},\dots,i_{t}]=\left\{ \begin{array}{cc}
(-1)^{t+1}[i_{0},\dots,i_{t},k] & \mathrm{if}\, i_{t}\ne k\\
0 & \mathrm{if}\, i_{t}=k
\end{array}\right.\label{eq:big-diag0}
\end{equation}
It is easy to verify that 
\[
\varphi_{k}\circ d+d\circ\varphi_{k}=1-i\circ\epsilon
\]
where $i:\ints\to C_{0}$ maps $1$ to $[k]$. We extend this to }}$C\otimes C$
via
\[
\Phi_{k}=\varphi_{k}\otimes1+i\circ\epsilon\otimes\varphi_{k}
\]
and use the Koszul convention on signs. Note that $\Phi_{k}^{2}=0$.
As in section~4 of \cite{Smith:1994}, if $e_{0}\in\rs 2$ is the
$0$-dimensional generator, we define
\[
f:\rs 2\otimes C\to C\otimes C
\]
 inductively by
\begin{eqnarray}
f(e_{0}\otimes[i]) & = & [i]\otimes[i]\nonumber \\
f(e_{0}\otimes[0,\dots,k]) & = & \sum_{i=0}^{k}[0,\dots,i]\otimes[i,\dots,k]\label{eq:big-diag1}
\end{eqnarray}
Let $\sigma=\Delta^{k}$ and inductively define
\begin{align*}
f(e_{k}\otimes\sigma) & =\Phi_{k}(f(\partial e_{k}\otimes\sigma)+(-1)^{k}\Phi_{k}f(e_{k}\otimes\partial\sigma)\\
 & =\Phi_{k}(f(\partial e_{k}\otimes\sigma)
\end{align*}
because of equation~\ref{eq:big-diag-condition}. 

Expanding $\Phi_{k}$, we get
\begin{align}
f(e_{k}\otimes\sigma) & =(\varphi_{k}\otimes1)(f(\partial e_{k}\otimes\sigma))+(i\circ\epsilon\otimes\varphi_{k})f(\partial e_{k}\otimes\sigma)\nonumber \\
 & =(\varphi_{k}\otimes1)(f(\partial e_{k}\otimes\sigma))\label{eq:big-diag2}
\end{align}
 because $\varphi^{2}=0$ and $\varphi\circ i\circ\epsilon=0$. 

Noting that $\partial e_{k}=(1+(-1)^{k}T)e_{k-1}\in\rs 2$, we get
\begin{align*}
f(e_{k}\otimes\sigma) & =(\varphi_{k}\otimes1)(f(e_{k-1}\otimes\sigma)+(-1)^{k}(\varphi_{k}\otimes1)\cdot T\cdot f(e_{k-1}\otimes\sigma)\\
 & =(-1)^{k}(\varphi_{k}\otimes1)\cdot T\cdot f(e_{k-1}\otimes\sigma)
\end{align*}
again, because $\varphi_{k}^{2}=0$ and $\varphi_{k}\circ i\circ\epsilon=0$.
We continue, using equation~\ref{eq:big-diag2} to compute $f(e_{k-1}\otimes\sigma)$:
\begin{align*}
f(e_{k}\otimes\sigma)= & (-1)^{k}(\varphi_{k}\otimes1)\cdot T\cdot f(e_{k-1}\otimes\sigma)\\
= & (-1)^{k}(\varphi_{k}\otimes1)\cdot T\cdot(\varphi_{k}\otimes1)\biggl(f(\partial e_{k-1}\otimes\sigma)\\
 & +(-1)^{k-1}f(e_{k-1}\otimes\partial\sigma)\biggr)\\
= & (-1)^{k}\varphi_{k}\otimes\varphi_{k}\cdot T\cdot\biggl(f(\partial e_{k-1}\otimes\sigma)\\
 & +(-1)^{k-1}f(e_{k-1}\otimes\partial\sigma)\biggr)
\end{align*}
If $k-1=0$, then the left term vanishes. If $k-1=1$ so $\partial e_{k-1}$
is $0$-dimensional then equation~\ref{eq:big-diag1} gives $f(\partial e_{1}\otimes\sigma)$
and this vanishes when plugged into $\varphi\otimes\varphi$. If $k-1>1$,
then $f(\partial e_{k-1}\otimes\sigma)$ is in the image of $\varphi_{k}$,
so it vanishes when plugged into $\varphi_{k}\otimes\varphi_{k}$.

In \emph{all} cases, we can write
\begin{align*}
f(e_{k}\otimes\sigma) & =(-1)^{k}\varphi_{k}\otimes\varphi_{k}\cdot T\cdot(-1)^{k-1}f(e_{k-1}\otimes\partial\sigma)\\
 & =-\varphi_{k}\otimes\varphi_{k}\cdot T\cdot f(e_{k-1}\otimes\partial\sigma)
\end{align*}
If $f(e_{k-1}\otimes\Delta^{k-1})=\epsilon_{k-1}\Delta^{k-1}\otimes\Delta^{k-1}$
(the inductive hypothesis), then 
\begin{multline*}
f(e_{k-1}\otimes\partial\sigma)=\\
\sum_{i=0}^{k}\epsilon_{k-1}\cdot(-1)^{i}[0,\dots,i-1,i+1,\dots k]\otimes[0,\dots,i-1,i+1,\dots k]
\end{multline*}
and the only term that does not get annihilated by $\varphi_{k}\otimes\varphi_{k}$
is 
\[
(-1)^{k}[0,\dots,k-1]\otimes[0,\dots,k-1]
\]
 (see equation~\ref{eq:big-diag0}). We get
\begin{align*}
f(e_{k}\otimes\sigma) & =\epsilon_{k-1}\cdot\varphi_{k}\otimes\varphi_{k}\cdot T\cdot(-1)^{k-1}[0,\dots,k-1]\otimes[0,\dots,k-1]\\
 & =\epsilon_{k-1}\cdot\varphi_{k}\otimes\varphi_{k}(-1)^{(k-1)^{2}+k-1}[0,\dots,k-1]\otimes[0,\dots,k-1]\\
 & =\epsilon_{k-1}\cdot(-1)^{(k-1)^{2}+2(k-1)}\varphi[0,\dots,k-1]\otimes\varphi[0,\dots,k-1]\\
 & =\epsilon_{k-1}\cdot(-1)^{k-1}[0,\dots,k]\otimes[0,\dots,k]\\
 & =\epsilon_{k}\cdot[0,\dots,k]\otimes[0,\dots,k]
\end{align*}
where the sign-changes are due to the Koszul Convention. We conclude
that $\epsilon_{k}=(-1)^{k-1}\epsilon_{k-1}$.
\end{proof}
This leads to the proof:
\begin{cor}
\label{cor:simplicial-abelian-injective}If $D\in\chaincat$, then
\[
\gamma_{D}:\cf{\pgam D}\to P_{\S}\moore{\pgam D}
\]
 is injective.\end{cor}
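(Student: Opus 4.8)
The plan is to derive the Corollary (equivalently, Theorem~\ref{thm:injectivity-theorem}) from the two results already established in this appendix: the linear-independence Lemma~\ref{lem:diagonals-linearly-independent} and the diagonal computation of Proposition~\ref{pro:simplicespropertyS}. Since $\gamma_D$ is a chain-map of degree $0$, injectivity is a property of the underlying graded abelian groups and may be tested one dimension at a time. Fix a degree $k$ and write $G_k=(\pgam D)_k$ for the abelian group of $k$-simplices. Because $\cf{\pgam D}$ is the coalgebra on the simplicial \emph{set} underlying the simplicial abelian group $\pgam D$, its underlying chain group in degree $k$ is the free abelian group $\ints[G_k]$ on the $k$-simplices, whereas $\moore{\pgam D}_k=G_k$ is that abelian group itself; the cogeneration map of Definition~\ref{def:pcmap} is, in this degree, the canonical surjection $\ints[G_k]\to G_k$ carrying a basis simplex $[\sigma]$ to the element $\sigma$.

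Next I would make the image of a single simplex explicit. Composing $\gamma_D$ with the canonical embedding
\[
P_{\S}\moore{\pgam D}\hookrightarrow\prod_{n\ge0}\homzs n(\rs n,\moore{\pgam D}^{\otimes n})
\]
and evaluating the $n^{\mathrm{th}}$ factor on the operad-composites of the diagonal generators $e_k\in\rs 2$, I would use that $\gamma_D$ is a coalgebra morphism covering the cogeneration map. Since $\gamma_D$ commutes with the binary coproduct, the $n$-fold iterate of the coproduct applied to $\gamma_D[\sigma]$ is the image under $\gamma_D^{\otimes n}$ of the $n$-fold iterate applied to $[\sigma]$; coassociativity together with Proposition~\ref{pro:simplicespropertyS} then shows that this iterate, evaluated on the appropriate composite of the generators $e_k$ and pushed to $\moore{\pgam D}^{\otimes n}=G_k^{\otimes n}$, equals $\sigma^{\otimes n}$ up to a sign $\eta_{n,k}\in\{\pm1\}$ depending only on $n$ and $k$, \emph{not} on $\sigma$ (the $n=1$ term being exactly $\sigma$). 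Note that $\sigma^{\otimes n}\neq0$ for every $n$, so the image genuinely lies in the product rather than the direct sum, consistent with the completion $\hat{C}=\ints\oplus\prod_{i\ge1}C^{\otimes i}$ of Lemma~\ref{lem:diagonals-linearly-independent}. Dividing out the universal signs identifies the composite $\ints[G_k]\to\hat{G}_k$ with the map $\bar e\colon[\sigma]\mapsto e(\sigma)=(\dots,\sigma,\sigma\otimes\sigma,\sigma\otimes\sigma\otimes\sigma,\dots)$ of that lemma.

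Lemma~\ref{lem:diagonals-linearly-independent} asserts precisely that $\bar e$ is injective, so $\gamma_D$ is injective in each degree $k$ and hence injective. The step I expect to be the main obstacle is the diagonal computation of the middle paragraph: Proposition~\ref{pro:simplicespropertyS} supplies only the binary ($n=2$) coproduct, so one must bootstrap to all tensor powers through coassociativity and the operad structure of $\S$, and verify that the accumulated signs $\eta_{n,k}$ are genuinely independent of the chosen simplex, so that they may be absorbed into an isomorphism of the target. Care is also needed at the constant ($n=0$) coordinate and at the basepoint: for a positive-degree simplex the augmentation coordinate is $0$ rather than the $1$ appearing in $e$, but this is harmless, since the Vandermonde determinant in the proof of Lemma~\ref{lem:diagonals-linearly-independent} remains nonzero when the leading constant row is dropped, provided the (nonzero) simplices $\sigma$ are distinct —-- which is exactly the situation in the pointed, reduced setting.
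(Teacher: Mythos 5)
Your proposal is correct and follows essentially the same route as the paper's own proof: embed $P_{\S}\moore{\pgam D}$ into $\prod_{n}\homzs n(\rs n,\moore{\pgam D}^{\otimes n})$, evaluate on the operad-composites of the diagonal generators $e_{m}$ (the paper's $F_{k}=e_{m}\circ_{1}\cdots\circ_{1}e_{m}$) to turn Proposition~\ref{pro:simplicespropertyS} into the statement that each simplex generator maps to its full diagonal sequence up to dimension-dependent signs, and then invoke Lemma~\ref{lem:diagonals-linearly-independent}. Your remark about the augmentation ($n=0$) coordinate being $0$ rather than $1$ is a point the paper glosses over, and your fix (distinct nonzero elements still give a nonvanishing Vandermonde-type determinant using positive powers) is sound.
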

\begin{proof}
Let 
\[
\alpha:\cf{\pgam D}\to P_{\S}\forgetful{\cf{\pgam D}}
\]
 be the coalgebra's classifying map and let $E=\forgetful{\cf{\pgam D}}$.
Then there is a canonical map $c:\forgetful{\cf{\pgam D}}=E\to\moore{\pgam D}$
that induces a bijection
\begin{equation}
\text{simplicial \emph{generators} of }\forgetful{\cf{\pgam D}}\leftrightarrow\text{\emph{elements} of }\moore{\pgam D}\label{eq:moore-bijection}
\end{equation}
 The results of \cite{Smith:cofree} imply that 
\[
P_{\S}E\subset\prod_{n=0}^{\infty}\homzs n(RS_{n},E^{\otimes n})
\]
and the map $a_{\cf X}$ induces a commutative diagram
\begin{equation}
\xymatrix{{\cf{\pgam D}}\ar[r]^{\alpha}\ar[rd]_{a_{\cf{\pz X}}} & {P_{\S}E}\ar[r]\ar[d]^{P_{\S}c} & {\prod_{n=0}^{\infty}\homzs n(RS_{n},E^{\otimes n})}\ar[d]^{\prod_{n=0}^{\infty}\homzs n(1,c^{\otimes n})}\\
{} & {P_{\S}\moore{\pgam D}}\ar[r] & {\prod_{n=0}^{\infty}\homzs n(RS_{n},\moore{\pgam D}^{\otimes n})}
}
\label{eq:proof-commut-dia}
\end{equation}
where we follow the convention that $\homzs 0(\rs 0,E^{0})=\ints$,
$\homzs 1(\rs 1,E)=E$, Let $p_{n}$ be projection to a factor
\[
p_{n}:\prod_{n=0}^{\infty}\homzs n(RS_{n},E^{\otimes n})\to\homzs n(RS_{n},E^{\otimes n})
\]
If $\sigma\in$ is an $m$-simplex defining an element $[\sigma]\in E_{m}$,
proposition~\ref{pro:simplicespropertyS} implies that 
\[
p_{2}\circ\alpha([\sigma])=\epsilon\cdot(e_{m}\mapsto[\sigma]\otimes[\sigma])\in\homzs 2(RS_{2},E\otimes E)
\]
where $\epsilon=\pm1$, depending on the dimension of $\sigma$. 

Let $F_{2}=e_{m}$ and $F_{k}=\underbrace{e_{m}\circ_{1}\cdots\circ_{1}e_{m}}_{k-1\text{ iterations}}\in RS_{k}$
be the operad-composite (see proposition~2.17 of \cite{Smith:1994}).
The fact that operad-composites map to composites of \emph{coproducts}
in a coalgebra implies that 
\[
p_{k}\circ\alpha([\sigma])=\epsilon^{k-1}\cdot(F_{k}\mapsto\underbrace{[\sigma]\otimes\cdots\otimes[\sigma]}_{k\text{ factors}})\in\homzs k(RS_{k},E^{\otimes k})
\]

If $\{\sigma_{1},\dots,\sigma_{t}\}\in\pgam D$ are \emph{distinct}
$m$-simplices representing elements $\{[\sigma_{1}],\dots,[\sigma_{t}]\}\in\moore{\pgam D}$
in the chain-complex, the bijection in \ref{eq:moore-bijection} implies
that $\{c[\sigma_{1}],\dots,c[\sigma_{t}]\}\in\moore{\pgam D}$ are
\emph{also} distinct (although no longer \emph{generators}). 

Their images in $\prod_{n=0}^{\infty}\homzs n(\rs n,\moore{\pgam D}^{\otimes n})$
will have the property that 
\[
p_{k}\circ a_{C}([\sigma_{i}])=\epsilon^{k-1}\cdot(F_{k}\mapsto\underbrace{c[\sigma_{i}]\otimes\cdots\otimes c[\sigma_{i}]}_{k\text{ factors}})\in\homzs k(RS_{k},\moore{\pgam D}^{\otimes k})
\]

Evaluation of elements of $\prod_{n=1}^{\infty}\homzs n(RS_{n},\moore{\pgam D}^{\otimes n})$
on the sequence $(\epsilon\cdot E_{2},\epsilon^{2}\cdot E_{3},\epsilon^{3}\cdot E_{4},\dots)$
gives a homomorphism of $\ints$-modules
\[
h:\prod_{n=0}^{\infty}\homzs n(RS_{n},\moore{\pgam D}^{\otimes n})\to\prod_{n=0}^{\infty}\moore{\pgam D}^{\otimes n}
\]
and $h\circ a_{C}(c_{i})$ is $e(c[\sigma_{i}])$ in lemma~\ref{lem:diagonals-linearly-independent}
(since $\epsilon^{2}=1$). The conclusion follows from lemma~\ref{lem:diagonals-linearly-independent}.
\end{proof}
\bibliographystyle{amsplain}

\providecommand{\bysame}{\leavevmode\hbox to3em{\hrulefill}\thinspace}
\providecommand{\MR}{\relax\ifhmode\unskip\space\fi MR }
\providecommand{\MRhref}[2]{%
  \href{http://www.ams.org/mathscinet-getitem?mr=#1}{#2}
}
\providecommand{\href}[2]{#2}


    \end{document}